\numberwithin{equation}{section}
\newtheorem{Theorem}{Theorem}[section]
\newtheorem{Definition}{Definition}
\newtheorem{Corollary}{Corollary}[section]
\newtheorem{Lemma}{Lemma}[section]
\newtheorem{Remark}{Remark}[section]
\begin{document}

\setcounter{page}{1}

\title[Higher order fractional Leibniz rule]
 {Higher order fractional Leinbiz rule}

 \author[K. Fujiwara]{Kazumasa Fujiwara}

\address{%
Department of Pure and Applied Physics \\ Waseda University \\
3-4-1, Okubo, Shinjuku-ku, Tokyo 169-8555 \\ Japan}

\email{k-fujiwara@asagi.waseda.jp}

\thanks{
The first author was partly supported by the Japan Society for the Promotion of Science,
Grant-in-Aid for JSPS Fellows no 26$\cdot$7371
and Top Global University Project of Waseda University.
}

\author[V. Georgiev]{Vladimir Georgiev}

\address{%
Department of Mathematics \\
University of Pisa \\
 Largo Bruno Pontecorvo 5
 I - 56127 Pisa
 \\ Italy}

\email{georgiev@dm.unipi.it}

\thanks{ The second author was supported in part by Contract FIRB " Dinamiche Dispersive: Analisi di Fourier e Metodi Variazionali.", 2012, by INDAM, GNAMPA - Gruppo Nazionale per l'Analisi Matematica, la Probabilita e le loro Applicazion and by Institute of Mathematics and Informatics, Bulgarian Academy of Sciences.}

 \author[T. Ozawa]{Tohru Ozawa}

\address{%
Department of Applied Physics \\ Waseda University \\
3-4-1, Okubo, Shinjuku-ku, Tokyo 169-8555 \\ Japan}

\email{txozawa@waseda.jp}
\thanks{The third author was supported by
Grant-in-Aid for Scientific Research (A) Number 26247014.}

\begin{abstract}
The fractional Leibniz rule is generalized by the Coifman-Meyer estimate.
It is shown that the arbitrary redistribution of fractional derivatives
for higher order with the corresponding correction terms.
\end{abstract}

\maketitle

\renewcommand{\thefootnote}{\fnsymbol{footnote}} 
\footnotetext{\emph{Key words:} Leibniz rule, fractional derivative}
\footnotetext{\emph{AMS Subject Classifications:} 46E35, 42B25}

\setcounter{page}{1}
\section{Introduction}

One of the most important tools to obtain local well-posedness
of nonlinear equations of mathematical physics
is based on the bilinear estimate of the form
	\begin{alignat}{2}\label{eq:1.1}
	\| D^s(fg)\|_{L^p}
	\leq C \| D^{s} f \|_{L^{p_1}} \| g \|_{L^{p_2}} +
	C \| f \|_{L^{p_3}} \| D^{s} g \|_{L^{p_4}},
	\end{alignat}
where $D^s = (-\Delta)^{s/2}$ is the standard Riesz potential
of order $s \in \mathbb R,$ $L^p = L^p(\mathbb R^n)$ and  $f,g \in \mathcal S(\mathbb R^n).$
A typical domain for parameters $s, p, p_j, j=1,\cdots,4,$ where \eqref{eq:1.1} is valid is
	\[
	s > 0,\ \
	1 < p, p_1,p_2,p_3,p_4 < \infty, \ \
	\frac{1}{p}= \frac{1}{p_1} + \frac{1}{p_2}
	= \frac{1}{p_3} + \frac{1}{p_4}.
	\]
Classical proof can be found in \cite{bib:20}.
The estimate can be considered as natural homogeneous version
of the non-homogeneous inequality of type \eqref{eq:1.1}
involving Bessel potentials $(1-\Delta)^{s/2}$ in the place of $D^s,$
obtained by Kato and Ponce in \cite{bib:23}
( for this the estimates of type \eqref{eq:1.1} are called Kato-Ponce estimates, too).
More general domain for parameters can be found in \cite{bib:17}.

Another estimate showing the flexibility in the redistribution of fractional derivatives
can be deduced when $0 < s < 1$.
More precisely,
Kenig, Ponce, and Vega \cite{bib:24} obtained the estimate
	\begin{align}
	\| D^s(fg) - f D^s g - g D^s f\|_{L^p}
	\le C \| D^{s_1} f \|_{L^{p_1}} \| D^{s_2} g \|_{L^{p_2}},
	\label{eq:1.2}
	\end{align}
provided
	\[
	0 < s = s_1 + s_2 < 1, \ \ s_1, s_2 \ge 0,
	\]
and
	\begin{alignat}{2}\label{eq:1.3}
	1 < p, p_1,p_2 < \infty, \ \
	\frac{1}{p}= \frac{1}{p_1} +  \frac{1}{p_2} .
	\end{alignat}
One can interpret the bilinear form
	\[
	\mathrm{Cor}_s(f,g) = f D^s g + g D^s f
	\]
as a correction term such that for any redistribution of the order $s$ of the derivatives, i.e. for any $s_1,s_2 \geq 0,$ such that $s_1+s_2=s$, we have
	\begin{align}
	\| D^s(fg) - \mathrm{Cor}_s(f,g)\|_{L^p}
	\le C \| D^{s_1} f \|_{L^{p_1}} \| D^{s_2} g \|_{L^{p_2}},
	\label{eq:1.4}
	\end{align}
i.e., we have flexible redistribution  of the derivatives of the remainder $D^s(fg) - \mathrm{Cor}_s(f,g).$

Estimates of the form \eqref{eq:1.2} are of interest on their own
in harmonic analysis
\cite{bib:1,bib:2,bib:3,bib:5,bib:6,bib:8,bib:16,bib:18,bib:19,
bib:20,bib:21,bib:23,bib:28,bib:31}
as well as in applications to nonlinear partial differential equations
\cite{bib:4,bib:7,bib:11,bib:22,bib:24,bib:26,bib:27,bib:29,bib:30}.
Our goal is to generalize \eqref{eq:1.2} in the case where $s \ge 1$.
It is shown in \cite{bib:13} that \eqref{eq:1.2} holds even for $s=1$ in one space dimension.
This means that we could expect \eqref{eq:1.4} with appropriate correction terms
in a general setting.
In fact, for $s=2$,
we have $D^2 = - \Delta$ and
	\[
	D^2(fg) - f D^2 g - g D^2 f + 2 \nabla f \cdot \nabla g = 0.
	\]
Therefore,
some additional correction terms might be necessary for $s > 1$.

Typically, one can use paraproduct decompositions
and reduce the proof of \eqref{eq:1.4} separating different frequency domains
for the supports of $\widehat f $ and $\widehat g $.
In the case, when $\widehat f $ is localized in low-frequency domain
and $\widehat g $ is localized in high-frequency domain,
the estimate \eqref{eq:1.4} can be derived from the commutator estimate
	\[
	\| [D^s, f]g\|_{L^p}
	\le C \| D^{s_1} f \|_{L^{p_1}} \| D^{s_2} g \|_{L^{p_2}},
	\]
where the assumption $s \leq 1$ plays a crucial role.
More precisely, if we assume
	\begin{equation}\label{eq:1.5}
	{\rm supp} \ \widehat f \subset \{\xi \in \mathbb R^n ;\thinspace |\xi | \leq 2^{k-2} \}, \ \
	{\rm supp} \ \widehat g \subset \{\xi \in \mathbb R^n ;\thinspace 2^{k-1}\leq |\xi | \leq 2^{k+1} \},
	\end{equation}
then we can use the relation
	\[
	[D^s, f]g(x) = A_s( Df, D^{s-1} g)(x),
	\]
where
	\[
	A_s(F,G)(x) = \int_{\mathbb{R}^n} \int_{\mathbb{R}^n}
	e^{i x(\xi+\eta)} a_s(\xi,\eta) \widehat F(\xi) \widehat G(\eta) d\xi d\eta
	\]
is a Coifman-Meyer type bilinear operator with a symbol $a_s(\xi,\eta)$
of Coifman-Meyer class supported in the cone
\begin{equation}\label{eq.ga}
   \Gamma = \{(\xi,\eta) \in \mathbb{R}^n \times \mathbb{R}^n; 0 < |\xi| \leq |\eta|/2 \},
\end{equation}
Recall the definition of Coifman-Meyer class:

\begin{Definition}
\label{Definition:1}
We say that a symbol
	\[
	\sigma \in  C^\infty(\mathbb{R}^n \setminus \{0\})
	\]
belongs to the H\"ormander class $S^0$, if for all multi-indices $\alpha \in \mathbb{N}_0^n,$ $\mathbb{N}_0= \mathbb{N} \cup \{0\},$ we have
	\[
	|\partial_{\xi}^\alpha \sigma(\xi)|\leq C_\alpha |\xi|^{-|\alpha|},
	\quad \forall \xi \neq 0.
	\]
We say that a bilinear symbol
	\[
	a \in C^\infty((\mathbb{R}^n \times \mathbb{R}^n) \setminus \{(0,0)\})
	\]
belongs to the Coifman-Meyer (CM) class, if
	\[
	|\partial_\xi^\alpha \partial_\eta^\beta a(\xi, \eta)|
	\leq C_{\alpha,\beta} (|\xi|+|\eta|)^{-|\alpha|-|\beta|}.
	\]
for all multi-indices $\alpha, \beta : |\alpha|+|\beta|<m_n$,
where $m_n$ depends on the dimension only.
\end{Definition}

It is well-known that operators with symbols in $S^0$
give rise to bounded operators on $L^p: 1<p<\infty$ spaces.
The  result of Coifman and Meyer (see \cite{bib:9,bib:10,bib:14,bib:25})
generalizes this result to bilinear symbols. Namely, it states that bilinear operators
	\[
	A(F,G)(x) = \int_{\mathbb{R}^n} \int_{\mathbb{R}^n}
	e^{i x(\xi+\eta)} a(\xi,\eta) \widehat F(\xi) \widehat G(\eta) d\xi d\eta
	\]
with symbols in the CM class satisfy
\begin{equation}\label{eq.CM1}
   \|A (F,G)\|_{L^p}\leq C_{p,p_1,p_2}\|F\|_{L^{p_1}}\|G\|_{L^{p_2}}
\end{equation}

for all $1<p,p_1,p_2<\infty$ and $1/p=1/p_1+1/p_2$.

Applying Coifman-Meyer bilinear estimate for $A_s$ we can deduce the following estimate
\begin{Lemma}
\label{Lemma:1.1}
Suppose $f, g$ satisfy the assumptions \eqref{eq:1.5} and $p,p_1,p_2$
satisfy $1 < p, p_1, p_2 < \infty$ and $1/p = 1/p_1 + 1/p_2$.
Then for any $s \geq 0$ we have
	\begin{align}
	\| [D^s, f]g \|_{L^p}
	\le C \| D^{1} f \|_{L^{p_1}} \| D^{s-1} g \|_{L^{p_2}}.
	\label{eq:1.6}
	\end{align}
\end{Lemma}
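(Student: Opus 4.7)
My plan is to execute the reduction sketched in the introduction: rewrite the commutator on the Fourier side as a bilinear operator acting on $Df$ and $D^{s-1}g$, and then invoke the Coifman--Meyer estimate \eqref{eq.CM1}. The convolution formula for $\widehat{fg}$ gives, up to a universal constant,
\[
[D^s,f]g(x) = c\iint e^{ix\cdot(\xi+\eta)}\bigl(|\xi+\eta|^s-|\eta|^s\bigr)\widehat f(\xi)\widehat g(\eta)\,d\xi\,d\eta,
\]
and the support assumption \eqref{eq:1.5} confines the integrand to the cone $\{|\xi|\le|\eta|/2\}$, where $|\xi+\eta|\ge|\eta|/2>0$.

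To produce the desired $A_s(Df,D^{s-1}g)$, I factor the symbol as $(|\xi+\eta|^s-|\eta|^s) = a_s(\xi,\eta)\cdot|\xi|\cdot|\eta|^{s-1}$ on the cone, so that the $|\xi|$ and $|\eta|^{s-1}$ factors are absorbed into $\widehat{Df}(\xi)=|\xi|\widehat f(\xi)$ and $\widehat{D^{s-1}g}(\eta)=|\eta|^{s-1}\widehat g(\eta)$, and then multiply by a smooth cutoff $\chi(\xi,\eta)$ that equals $1$ on the support of $\widehat f(\xi)\widehat g(\eta)$, is supported in a slight enlargement of $\Gamma$, and is homogeneous of degree $0$ outside a neighborhood of the origin. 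The resulting symbol
\[
a_s(\xi,\eta) = \chi(\xi,\eta)\,\frac{|\xi+\eta|^s-|\eta|^s}{|\xi|\,|\eta|^{s-1}}
\]
is positively homogeneous of degree $0$ in $(\xi,\eta)$, so a standard scaling argument reduces the Coifman--Meyer bounds $|\partial_\xi^\alpha\partial_\eta^\beta a_s(\xi,\eta)|\le C_{\alpha,\beta}(|\xi|+|\eta|)^{-|\alpha|-|\beta|}$ to smoothness on the cone, which holds because each of $|\xi|$, $|\eta|$, $|\xi+\eta|$ is comparable to $|\xi|+|\eta|$ there.

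Once $a_s$ is identified as a Coifman--Meyer symbol, the bilinear estimate \eqref{eq.CM1} yields
\[
\|[D^s,f]g\|_{L^p}=\|A_s(Df,D^{s-1}g)\|_{L^p}\le C\|Df\|_{L^{p_1}}\|D^{s-1}g\|_{L^{p_2}},
\]
which is \eqref{eq:1.6}. The main obstacle lies in the symbol check of the previous paragraph, and the subtle point is that at $\xi=0$ the factor $1/|\xi|$ meets a vanishing numerator, so $a_s$ is not manifestly continuous there. This is resolved by using the equivalent representation $|\xi+\eta|^s-|\eta|^s = \xi\cdot\int_0^1 s(\eta+t\xi)|\eta+t\xi|^{s-2}\,dt$, which on the cone produces a symbol genuinely smooth down to $\xi=0$ but paired with $\nabla f$ instead of $Df$; the conclusion \eqref{eq:1.6} then follows via the Riesz-transform equivalence $\|Df\|_{L^{p_1}}\simeq\|\nabla f\|_{L^{p_1}}$. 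The confinement to $\Gamma$ is essential throughout, since it keeps us uniformly away from $\eta=0$ and $\xi+\eta=0$ regardless of the size of $s$, so no case distinction between $s<1$ and $s\ge 1$ is required.
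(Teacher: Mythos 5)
Your final argument --- writing $|\xi+\eta|^s-|\eta|^s=\xi\cdot\int_0^1 s(\eta+t\xi)|\eta+t\xi|^{s-2}\,dt$, checking that the resulting symbol (after dividing out $|\eta|^{s-1}$) is of Coifman--Meyer class on the cone $\Gamma$, and converting $\nabla f$ back to $Df$ via Riesz transforms --- is exactly the paper's proof, namely the chain of inequalities \eqref{eq:1.8} built on the symbols $\partial_\theta a_s(\xi,\eta,\theta)$ and the bounds of Lemma \ref{Lemma:1.2}. Your first factorization through $\bigl(|\xi|\,|\eta|^{s-1}\bigr)^{-1}$ would not by itself satisfy the Coifman--Meyer derivative bounds on $\Gamma$ (there $|\xi|$ is \emph{not} comparable to $|\xi|+|\eta|$, so $\xi$-derivatives of $1/|\xi|$ cost $|\xi|^{-1}$ rather than $(|\xi|+|\eta|)^{-1}$), but you correctly identify this defect and replace that symbol by the integral representation, so the proof stands.
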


This estimate and the assumptions \eqref{eq:1.5}
explains the possibility to redistribute the fractional derivatives.
Namely, if $f$ and $g$ satisfy \eqref{eq:1.5},
we have the possibility to replace the right hand side of \eqref{eq:1.6} by
$C \| D^{s_1} f \|_{L^{p_1}} \| D^{s_2} g \|_{L^{p_2}}$
for any couple $(s_1,s_2) $ of non-negative real numbers with $0 < s_1+s_2=s < 1$.

Our main goal is to study a similar effect
of arbitrary redistribution of fractional derivatives
for $s \ge 2$ in the scale of Lebesgue and Triebel-Lizorkin spaces
in $\mathbb R^n$.

First, we shall try to explain the correction term in \eqref{eq:1.4},
such that estimate of type
	\[
	\| [D^s, f]g - \mathrm{Cor}_s(f,g)\|_{L^p}
	\le C \| D^{2} f \|_{L^{p_1}} \| D^{s-2} g \|_{L^{p_2}}
	\]
will be fulfilled.

Let $a_s(\xi,\eta,\theta) = |\eta + \theta \xi|^s$.
We also define
	\begin{align}
	\label{eq:1.7}
	A_{s}^m(\theta)(f,g)
	&= \int_{\mathbb R^n} \int_{\mathbb R^n} e^{i x(\xi+\eta)}
	\frac{1}{m!} \partial_\theta^m a_s(\xi,\eta,\theta) \hat f(\xi) \hat g(\eta) d \xi d \eta,\\
	\widetilde A_{s}^\alpha (\theta)(f,g)
	&= \int_{\mathbb R^n} \int_{\mathbb R^n} e^{i x(\xi+\eta)}
	\frac{\alpha!}{|\alpha|!} \partial_\eta^\alpha a_s (\xi,\eta,\theta)
	\hat f(\xi) \hat g(\eta) d \xi d \eta.
	\nonumber
	\end{align}
Then $A_s^0(1)(f,g) = D^s(fg)$, $A_s^0(0)(f,g) = f D^s g $, and
$A_s^1(0)(f,g) = s \nabla f \cdot D^{s-2} \nabla g$.
Moreover, we have the following estimate:

\begin{Lemma}
\label{Lemma:1.2}
For any multi - indices $\alpha,\ \beta$ one can find a constant $C>0$ so that for
	\[
	(\xi,\eta) \in \Gamma
	= \{(\xi,\eta) \in \mathbb{R}^n \times \mathbb{R}^n;\thinspace 0 < |\xi| \leq |\eta|/2 \},
	\]
one has the estimate
	\[
	\sup_{0 \leq \theta \leq 1}
	| \partial_\xi^\alpha \partial_\eta^\beta a_s (\xi,\eta,\theta) |
	\leq C |\eta|^{s - |\alpha| - |\beta|}.
	\]
\end{Lemma}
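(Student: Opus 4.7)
The plan is to exploit the fact that on the cone $\Gamma$, the ``diagonal'' direction $y = \eta + \theta \xi$ has size comparable to $|\eta|$ uniformly in $\theta \in [0,1]$, so that the estimate reduces to the standard bound on derivatives of the homogeneous function $y \mapsto |y|^s$.

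First, I would establish the elementary geometric fact that for every $(\xi,\eta) \in \Gamma$ and every $\theta \in [0,1]$,
\[
\tfrac{1}{2}|\eta| \;\leq\; |\eta| - |\xi| \;\leq\; |\eta + \theta\xi| \;\leq\; |\eta| + |\xi| \;\leq\; \tfrac{3}{2}|\eta|.
\]
This uses only the triangle inequality together with the defining bound $|\xi| \le |\eta|/2$ on $\Gamma$, and it guarantees that $\eta + \theta\xi$ stays away from the origin, so that $|y|^s$ is smooth at the point of evaluation.

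Next, I would reduce the mixed derivative to a single derivative of $|\cdot|^s$. Writing $y(\xi,\eta,\theta) = \eta + \theta\xi$, one has $\partial_{\xi_i} y = \theta \, e_i$ and $\partial_{\eta_i} y = e_i$, so by the chain rule
\[
\partial_\xi^\alpha \partial_\eta^\beta a_s(\xi,\eta,\theta)
\;=\; \theta^{|\alpha|}\,\bigl(\partial^{\alpha+\beta}|\cdot|^s\bigr)(\eta + \theta\xi).
\]
Since $\theta \in [0,1]$, the factor $\theta^{|\alpha|}$ is harmless.

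The last ingredient is the classical pointwise estimate
\[
\bigl|\partial^\gamma |y|^s\bigr| \;\leq\; C_\gamma\, |y|^{s - |\gamma|}, \qquad y \neq 0,
\]
valid for every multi-index $\gamma$. This is proved by a straightforward induction on $|\gamma|$: differentiating $|y|^s$ once produces $s y_j |y|^{s-2}$, which is homogeneous of degree $s-1$, and each successive differentiation lowers the degree by one while keeping the structure of a finite sum of terms $c\, y^{\mu}|y|^{s-2k}$ with $2k - |\mu| = |\gamma|$. Applying this with $\gamma = \alpha + \beta$ at $y = \eta + \theta\xi$ and invoking the first step to replace $|\eta + \theta\xi|$ by $|\eta|$ (up to absolute constants) yields the claimed bound, uniformly in $\theta \in [0,1]$.

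There is no real obstacle: the statement is essentially the observation that $|y|^s$ is smooth away from zero and homogeneous of degree $s$, combined with the fact that $\Gamma$ prevents $\eta + \theta\xi$ from collapsing onto the origin. The only point demanding mild care is keeping track of the chain-rule factor $\theta^{|\alpha|}$ and verifying that it remains bounded on the whole interval $[0,1]$, which is immediate.
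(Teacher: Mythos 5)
Your proof is correct and follows essentially the same route as the paper: the paper proves Lemma~\ref{Lemma:1.2} by combining the fact that $|\cdot|^s\in S^s$ with Lemma~\ref{Lemma:2.6}, whose proof is exactly your chain-rule identity $\partial_\xi^\alpha\partial_\eta^\beta a_s=\theta^{|\alpha|}(\partial^{\alpha+\beta}|\cdot|^s)(\eta+\theta\xi)$ together with the comparability $\tfrac12|\eta|\le|\eta+\theta\xi|\le\tfrac32|\eta|$ on $\Gamma$. The only difference is that you also sketch the (standard) inductive proof that $|\cdot|^s\in S^s$, which the paper simply cites.
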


Lemma \ref{Lemma:1.2} and the Coifman-Meyer estimate show that
for any $f, g \in \mathcal S$ which satisfy \eqref{eq:1.5},
	\[
	\| \widetilde A_s^\alpha(f,g) \|_{L^p}
	\leq C \| f \|_{L^{p_1}} \| D^{s-|\alpha|} g \|_{L^{p_2}}.
	\]
Since
	\[
	\partial_\theta^m a_s(\xi,\eta,\theta)
	= \sum_{|\alpha| = m} \alpha ! \thinspace
	\partial_\eta^\alpha a_s(\xi,\eta,\theta) \xi^\alpha,
	\]
we have for any $f,g \in \mathcal S$ which satisfy \eqref{eq:1.5},
	\begin{align}
	\label{eq:1.8}
	\| [D^s,f] g \|_{L^p}
	&= \| A_s^0(1)(f,g) - A_s^0(0)(f,g) \|_{L^p}\\
	&\leq \int_0^1 \| A_s^1(\theta) (f, g) \|_{L^p} d \theta
	\nonumber \\
	&\leq \sum_{|\alpha|=1} \int_0^1
	\| \widetilde A_s^\alpha (\theta) (\partial^\alpha f, g) \|_{L^p} d \theta
	\nonumber \\
	&\leq C \| D f \|_{L^{p_1}} \| D^{s-1} g \|_{L^{p_2}},
	\nonumber \\
	\label{eq:1.9}
	\| [D^s,f] g - s \nabla f \cdot D^{s-2} \nabla g \|_{L^p}
	&= \| A_s^0(1)(f,g) - A_s^0(0)(f,g) - A_s^1(0)(f,g) \|_{L^p}\\
	&\leq \int_0^1 \| A_s^2(\theta) (f, g) \|_{L^p} d \theta
	\nonumber \\
	&\leq \sum_{|\alpha|=2}
	\int_0^1 \| \widetilde A_s^\alpha (\theta) (\partial^\alpha f, g) \|_{L^p} d \theta
	\nonumber \\
	&\leq C \| D^2 f \|_{L^{p_1}} \| D^{s-2} g \|_{L^{p_2}}.
	\nonumber
	\end{align}
These estimates and the assumptions \eqref{eq:1.5} explain
the redistribution the fractional derivatives,
since we have the possibility to replace the right hand sides
of the last inequalities of \eqref{eq:1.8} and \eqref{eq:1.9}
by $C \| D^{s_1} f \|_{L^{p_1}} \| D^{s_2} g \|_{L^{p_2}}$
and $C \| D^{s_1} f \|_{L^{p_1}} \| D^{s_2} g \|_{L^{p_2}}$,
respectively,
for any couple $(s_1,s_2) $ of non-negative real numbers with $s_1+s_2=s$.
For details, see Lemma \ref{Lemma:2.2}.

To state the main results in this article,
we introduce the following notation.
Let $\Phi \in \mathcal S$ be radial function and  satisfy $\hat \Phi \geq 0$,
	\[
	\mathrm{supp} \thinspace \hat \Phi
	\subset \{ \xi \in \mathbb R^n ; \ 2^{-1} < |\xi| < 2\},
	\qquad
	\sum_{j \in \mathbb Z} \hat \Phi(2^{-j} \xi) = 1
	\]
for all $\xi \in \mathbb R^n \backslash \{0\}$,
where $\hat \Phi = \mathfrak F \Phi$ is the Fourier transform of $\Phi$.
We define
$\Phi_j = \mathfrak F^{-1} ( \hat \Phi (2^{-j} \cdot))
= 2^{jn} \Phi(2^j \cdot)$,
$\widetilde \Phi_j = \sum_{k=-2}^2 \Phi_{j+k}$,
and $\Psi_j = 1 - \sum_{k > j} \Phi_k$
for $j \in \mathbb Z$.
For simplicity,
we denote $\widetilde \Phi = \widetilde \Phi_0$ and
$\Psi = \Psi_0 $.
For $f \in \mathcal S'$,
we define $P_j f$, $P_{\le j} f$, and $P_{> j} f$ as
	\[
	P_j f = \Phi_j \ast f,\qquad
	P_{\le j} f = \Psi_j \ast f,\qquad
	P_{> j} f = \bigg( \sum_{k > j} \Phi_k \bigg) \ast f,\qquad
	\]
respectively,
where $\ast$ denotes the convolution.

We are ready now to state our main results.
\begin{Theorem}
\label{Theorem:1.1}
Let $\ell \in \mathbb N$.
Let $p,p_1,p_2$ satisfy $1 < p, p_1, p_2 < \infty$ and $1/p = 1/p_1 + 1/p_2$.
Let $s,s_1,s_2$ satisfy $0 \leq s_1, s_2$ and $\ell-1 \leq s = s_1 + s_2 \leq \ell$.
Then the following bilinear estimate
	\begin{align*}
	&\bigg\| D^s(fg)
	- \sum_{k \in \mathbb Z} \sum_{m=0}^{\ell-1}
	A_s^m(0) (P_{\leq k-3} f, P_k g)
	- \sum_{j \in \mathbb Z} \sum_{m=0}^{\ell-1}
	A_s^m(0) (P_{\leq j - 3} g, P_j f) \} \bigg\|_{L^p}\\
	&\le C \| D^{s_1} f\|_{L^{p_1}} \| D^{s_2} g\|_{L^{p_2}}
	\end{align*}
holds for all $f,g \in \mathcal S$,
where $C$ is a constant depending only on
$n,p,p_1,p_2$.
\end{Theorem}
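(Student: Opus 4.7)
The plan is to apply the Littlewood-Paley paraproduct decomposition
\begin{align*}
fg = \sum_{k\in\mathbb Z} P_{\le k-3}f\cdot P_k g + \sum_{j\in\mathbb Z} P_{\le j-3}g\cdot P_j f + \sum_{|j-k|\le 2} P_j f\cdot P_k g
\end{align*}
and estimate the three pieces separately after applying $D^s$. The first two match exactly, modulo the Taylor corrections, the two sums subtracted in the theorem, so the problem reduces to bounding in $L^p$: (i) the Taylor remainder of the low-high paraproduct summed over $k$, (ii) the symmetric high-low counterpart, and (iii) the uncorrected diagonal $\sum_{|j-k|\le 2}D^s(P_j f\cdot P_k g)$.

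For (i), the Fourier supports of $P_{\le k-3}f$ and $P_k g$ place each summand in the cone $\Gamma$ of Lemma \ref{Lemma:1.2}, since $|\xi|\le 2^{k-2}\le|\eta|/2$ when $|\eta|\ge 2^{k-1}$. Applying Taylor's theorem with integral remainder to $\theta\mapsto a_s(\xi,\eta,\theta)$ about $\theta=0$, evaluated at $\theta=1$, yields
\begin{align*}
D^s(P_{\le k-3}f\cdot P_k g) - \sum_{m=0}^{\ell-1}A_s^m(0)(P_{\le k-3}f,P_k g) = \ell\int_0^1 (1-\theta)^{\ell-1} A_s^\ell(\theta)(P_{\le k-3}f, P_k g)\,d\theta.
\end{align*}
Mimicking the chain \eqref{eq:1.8}--\eqref{eq:1.9}, the identity $\partial_\theta^\ell a_s = \sum_{|\alpha|=\ell}c_\alpha\xi^\alpha\partial_\eta^\alpha a_s$ rewrites $A_s^\ell(\theta)$ as a finite sum of $\widetilde A_s^\alpha(\theta)$ acting on $(\partial^\alpha P_{\le k-3}f,P_k g)$. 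Lemma \ref{Lemma:1.2} ensures that $\partial_\eta^\alpha a_s(\cdot,\cdot,\theta)$ is a Coifman-Meyer symbol on $\Gamma$ of order $s-\ell$ in $|\eta|$ uniformly in $\theta\in[0,1]$, and the derivative-redistribution argument packaged in Lemma \ref{Lemma:2.2} then allows us to assign arbitrary admissible $(s_1,s_2)$, producing the pointwise estimate
\begin{align*}
\bigl|A_s^\ell(\theta)(P_{\le k-3}f,P_k g)(x)\bigr|\lesssim M(D^{s_1}P_{\le k-3}f)(x)\cdot M(D^{s_2}P_k g)(x).
\end{align*}
Since each $k$-remainder has Fourier support in an annulus of radius $\sim 2^k$ (inherited from $P_k g$), the Littlewood-Paley square-function characterization together with the Fefferman-Stein vector-valued maximal inequality, H\"older, and the trivial bound $\sup_k M(D^{s_1}P_{\le k-3}f)\lesssim M(D^{s_1}f)$ sum the $k$-contributions to $\|D^{s_1}f\|_{L^{p_1}}\|D^{s_2}g\|_{L^{p_2}}$. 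Task (ii) is identical after swapping the roles of $f$ and $g$.

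The main obstacle is task (iii): the diagonal sum carries no Taylor correction, and the symbol $|\xi+\eta|^s$ is singular on $\xi+\eta=0$, a region reachable precisely when $|\xi|\sim|\eta|$, which is exactly the regime present here. The plan is to bypass the singularity by direct frequency-localized kernel estimates: $P_j f\cdot\widetilde P_j g$ has Fourier support in $\{|\zeta|\lesssim 2^j\}$, so Bernstein's inequality for $D^s$ acting on such frequency-localized functions (convolution with a kernel of size $2^{js}$ and scale $2^{-j}$), combined with Peetre's pointwise domination of frequency-localized functions by their Hardy-Littlewood maximal function, yields
\begin{align*}
\bigl|D^s(P_j f\cdot\widetilde P_j g)(x)\bigr|\lesssim 2^{js}M(P_j f)(x)\cdot M(\widetilde P_j g)(x).
\end{align*}
Splitting $2^{js}=2^{js_1}\cdot 2^{js_2}$, converting $2^{js_i}|P_j h|$ into a maximal function of $D^{s_i}P_j h$ via Bernstein, and then applying Cauchy-Schwarz in $j$, H\"older in $L^p$, the (iterated) Fefferman-Stein maximal inequality, and the Littlewood-Paley characterization $\|D^{s_i}h\|_{L^{p_i}}\sim\|(\sum_j 2^{2js_i}|P_j h|^2)^{1/2}\|_{L^{p_i}}$ closes the diagonal bound and completes the proof.
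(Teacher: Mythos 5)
Your proposal is correct and follows essentially the same route as the paper: the same paraproduct decomposition into low--high, high--low and diagonal pieces, the Taylor expansion of $a_s(\xi,\eta,\theta)$ in $\theta$ about $0$ producing exactly the correction terms $A_s^m(0)$, symbol bounds on the cone $\Gamma$ for the integral-form remainder, and a separate maximal-function/square-function treatment of the diagonal (the paper's Lemma \ref{Lemma:2.5}). The only difference is one of packaging: where the paper invokes the Coifman--Meyer theorem as a black box for the remainder $\sum_{|\alpha|=\ell}T^\alpha_\ll(\partial^\alpha f, D^{s-\ell}g)$ and redistributes derivatives by interpolation (Lemma \ref{Lemma:3.2}), you re-derive the needed frequency-localized bounds by hand via Peetre/Bernstein-type pointwise estimates, Lemma \ref{Lemma:2.2}, and the Fefferman--Stein inequality, which yields the same conclusion.
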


Moreover,
we have the generalization of \eqref{eq:1.2}
and simple correction term for $s \geq 2$
as a corollary of Theorem \ref{Theorem:1.1}.
\begin{Corollary}
\label{Corollary:1.1}
Let $p,p_1,p_2$ satisfy $1 < p, p_1, p_2 < \infty$ and $1/p = 1/p_1 + 1/p_2$.
Let $s,s_1,s_2$ satisfy $0 \leq s_1, s_2 \leq 1$, and $s = s_1 + s_2$.
Then the following bilinear estimate
	\[
	\| D^s(fg) - f D^s g - g D^s f \|_{L^{p}}
	\leq C \|D^{s_1} f\|_{L^{p_1}} \| D^{s_2} g\|_{L^{p_2}}
	\]
holds for all $f,g \in \mathcal S$.
\end{Corollary}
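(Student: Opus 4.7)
My plan is to apply Theorem \ref{Theorem:1.1} with $\ell$ chosen according to the size of $s = s_1+s_2 \in [0,2]$, and then to reconcile the paraproduct correction produced by the theorem with the simple correction $fD^s g + gD^s f$ appearing in the corollary. I would take $\ell = 1$ when $s\leq 1$ and $\ell = 2$ when $s\in(1,2]$; both are compatible with the constraint $\ell-1\leq s\leq \ell$ in Theorem \ref{Theorem:1.1}.

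For $\ell = 1$, since $D^s$ commutes with the Littlewood-Paley projections, $A_s^0(0)(P_{\leq k-3}f, P_k g) = P_{\leq k-3}f\cdot P_k(D^s g)$, so the paraproduct correction in Theorem \ref{Theorem:1.1} is exactly $T_f(D^s g) + T_g(D^s f)$, where $T_h k = \sum_j P_{\leq j-3}h\cdot P_j k$ denotes the Bony paraproduct. Using Bony's identity $hk = T_h k + T_k h + R(h,k)$ with $R$ the high-high remainder,
$$\bigl(fD^s g + gD^s f\bigr) - \bigl(T_f(D^s g) + T_g(D^s f)\bigr) = T_{D^s g}(f) + T_{D^s f}(g) + R(f, D^s g) + R(g, D^s f).$$
The task is then to bound these four dual pieces in $L^p$ by $C\|D^{s_1}f\|_{L^{p_1}}\|D^{s_2}g\|_{L^{p_2}}$. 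By symmetry it suffices to control $T_{D^s g}(f) + R(f, D^s g) = \sum_{k\geq j-2} P_k f\cdot D^s P_j g$, whose bilinear symbol is $|\eta|^s$ restricted to the region $\{|\eta|\lesssim |\xi|\}$. Factoring $|\eta|^s = |\xi|^{s_1}|\eta|^{s_2}\cdot (|\eta|/|\xi|)^{s_1}$, the residual factor is bounded by a constant on that region and, on each dyadic shell, satisfies Coifman-Meyer derivative bounds uniformly after rescaling. I would then apply \eqref{eq.CM1} on each shell to the bilinear operator acting on $(D^{s_1}f, D^{s_2}g)$ and sum via the Littlewood-Paley square function characterization of $L^p$.

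For $\ell = 2$ (the case $s\in(1,2]$), Theorem \ref{Theorem:1.1} produces an additional first-order correction $s\sum_k \nabla P_{\leq k-3}f\cdot D^{s-2}\nabla P_k g$ together with its $f\leftrightarrow g$ partner. Each such gradient paraproduct is a Coifman-Meyer operator of total order $s$ on the paraproduct cone, and the same dyadic Coifman-Meyer redistribution as above (now using the constraint $s-1\leq s_1,s_2\leq 1$) bounds it by $C\|D^{s_1}f\|_{L^{p_1}}\|D^{s_2}g\|_{L^{p_2}}$, so it is absorbed into the right-hand side of the corollary. The Bony discrepancy of the zeroth-order paraproducts from $fD^s g + gD^s f$ is then handled exactly as in the case $s\leq 1$.

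The main obstacle I anticipate is that the redistribution symbol $(|\eta|/|\xi|)^{s_1}$ is not a Coifman-Meyer symbol globally in the sense of Definition \ref{Definition:1}, because $|\eta|^{s_1}$ fails to be smooth at $\eta = 0$ whenever $s_1\notin 2\mathbb N_0$. The resolution, which I would use, is to localize to each dyadic shell, where after rescaling the symbol is smooth and the Coifman-Meyer derivative bounds hold uniformly, and to reassemble via the Littlewood-Paley square function. This is the mechanism behind Lemma \ref{Lemma:2.2}, which I expect to be the technical workhorse of the argument.
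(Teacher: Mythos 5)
Your route is genuinely different from the paper's: you deduce the corollary from Theorem \ref{Theorem:1.1} and then reconcile its paraproduct correction with the simple correction $fD^sg+gD^sf$ via Bony's decomposition, whereas the paper never invokes Theorem \ref{Theorem:1.1} here. Instead it applies the splitting \eqref{eq:3.1} directly to $B(f,g)=D^s(fg)-fD^sg-gD^sf$, estimates the resonant part $B_\sim$ by Lemma \ref{Lemma:2.5}, and for $B_\ll$ writes $D^s(fg)-fD^sg$ as a first-order Taylor remainder $\sum_{|\alpha|=1}T^\alpha_\ll(\partial^\alpha f,D^{s-1}g)$ with a Coifman--Meyer symbol; the redistribution to $(s_1,s_2)$ with $s_1\le 1$ is then done by Lemma \ref{Lemma:3.2a}, i.e.\ by interpolating between the two integer endpoints $\|f\|\,\|D^{s}g\|$ and $\|Df\|\,\|D^{s-1}g\|$, rather than by factoring out the residual symbol $(|\eta|/|\xi|)^{s_1}$ as you propose. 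Notably the paper uses the same first-order expansion for the whole range $s\in[0,2]$ (only $s_1,s_2\le1$ matters), so your case split $\ell=1$ versus $\ell=2$, while legitimate, is unnecessary. Your remaining pieces $T_{D^sg}(f)$, $T_{D^sf}(g)$ and the gradient corrections arising at $\ell=2$ are exactly the objects the paper bounds with Lemma \ref{Lemma:3.2a}, so that part of your plan is sound in substance.

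The step that does not close as written is the treatment of the resonant remainders $R(f,D^sg)$ and $R(g,D^sf)$. You lump them with $T_{D^sg}(f)$ into $\sum_{k\ge j-2}P_kf\cdot D^sP_jg$ and propose to apply the Coifman--Meyer estimate shell by shell and ``sum via the Littlewood--Paley square function characterization of $L^p$''. For the strictly high--low shells ($k\ge j+3$) the summands are frequency-supported in annuli $|\zeta|\sim2^k$ and this reassembly works, with the caveat that the geometric decay $2^{-(k-j)s_1}$ across shells disappears at $s_1=0$, where you must instead treat the whole paraproduct as a single Coifman--Meyer operator (the residual is then identically $1$, so this is easy but must be said). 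For $|k-j|\le2$, however, the product $P_kf\cdot D^sP_jg$ has Fourier support in the full ball $\{|\zeta|\lesssim 2^k\}$, the pieces are not square-function orthogonal, and uniform shell-wise bounds do not sum. This is precisely why the paper isolates $B_\sim$ and proves Lemma \ref{Lemma:2.5} for it, by duality, the pointwise maximal bound of Lemma \ref{Lemma:2.2}, and the Fefferman--Stein inequality of Lemma \ref{Lemma:2.4}; you need that lemma (or an equivalent summation exploiting $s>0$) to finish, and it, rather than Lemma \ref{Lemma:2.2} alone, is the technical workhorse you are looking for. So: a viable and genuinely different strategy, but the resonant term requires the argument of Lemma \ref{Lemma:2.5} in place of the square-function reassembly you describe, and the $s_1=0$ endpoint of your shell summation needs a separate word.
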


\begin{Corollary}
\label{Corollary:1.2}
Let $p,p_1,p_2$ satisfy $1 < p, p_1, p_2 < \infty$ and $1/p = 1/p_1 + 1/p_2$.
Let $s,s_1,s_2$ satisfy $0 \leq s_1, s_2 \leq 2$ and $s = s_1 + s_2 \geq 2$.
Then the following bilinear estimate
	\[
	\| D^s(fg) - f D^s g - g D^s f
	+ s D^{s-2} ( \nabla f \cdot \nabla g) \|_{L^p}
	\leq C \|D^{s_1} f\|_{L^{p_1}} \|D^{s_2} g\|_{L^{p_2}}
	\]
holds for all $f,g \in \mathcal S$.
\end{Corollary}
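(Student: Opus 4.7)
The plan is to derive Corollary~\ref{Corollary:1.2} from Theorem~\ref{Theorem:1.1} by showing that the paraproduct correction sum appearing in the theorem agrees, modulo an $L^p$-error of size $C\|D^{s_1}f\|_{L^{p_1}}\|D^{s_2}g\|_{L^{p_2}}$, with the simpler correction $fD^sg + gD^sf - sD^{s-2}(\nabla f\cdot\nabla g)$ of the corollary. Since the hypotheses force $s=s_1+s_2\in[2,4]$, I choose $\ell\in\{2,3,4\}$ minimal with $\ell-1\leq s\leq\ell$ and apply Theorem~\ref{Theorem:1.1}. Writing $\mathcal{S}(f,g)$ for the double sum appearing there, the triangle inequality reduces the corollary to establishing
\[
\bigl\|\mathcal{S}(f,g) - fD^sg - gD^sf + sD^{s-2}(\nabla f\cdot\nabla g)\bigr\|_{L^p}
\leq C\|D^{s_1}f\|_{L^{p_1}}\|D^{s_2}g\|_{L^{p_2}}.
\]

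I analyze $\mathcal{S}(f,g)$ according to the Taylor index $m$. For $m=0$, since $A_s^0(0)(P_{\leq k-3}f,P_kg) = P_{\leq k-3}f\cdot P_k(D^sg)$, summing over $k$ yields Bony's low-high paraproduct for $(f,D^sg)$, and the symmetric sum gives the analogous paraproduct for $(g,D^sf)$; together these capture two of the three Bony pieces of $fD^sg+gD^sf$, and the remaining high-low paraproducts and resonance pieces have symbols that, when restricted to their frequency cones and divided by $|\xi|^{s_1}|\eta|^{s_2}$, lie in the Coifman-Meyer class under the assumption $s_1,s_2\leq 2$, hence are absorbed into the error via \eqref{eq.CM1}. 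For $m=1$, a direct Fourier calculation identifies $A_s^1(0)(u,v) = \pm s\,\nabla u\cdot D^{s-2}\nabla v$, and the corresponding paraproduct sums should reconstruct $\mp sD^{s-2}(\nabla f\cdot\nabla g)$ after commuting $D^{s-2}$ across the pointwise inner product; the commutator symbol $|\xi+\eta|^{s-2}-|\eta|^{s-2} = O(|\xi|\,|\eta|^{s-3})$ on the low-$f$ high-$g$ cone, controlled by Lemma~\ref{Lemma:1.2}, is of Coifman-Meyer type with the correct homogeneity for absorption. For $m\geq 2$ (only when $\ell\geq 3$), the symbol of $A_s^m(0)$ has magnitude $|\xi|^m|\eta|^{s-m}$ on the low-high cone by Lemma~\ref{Lemma:1.2}; the factorization $|\xi|^m|\eta|^{s-m} = |\xi|^{s_1}|\eta|^{s_2}(|\xi|/|\eta|)^{m-s_1}$, together with $m\geq 2\geq s_1$ and $|\xi|\leq|\eta|/2$, yields a CM symbol after rescaling, and \eqref{eq.CM1} again delivers the absorption.

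The principal obstacle will be the identification step for $m=1$: although the low-high and high-low sums of $\nabla P_{\leq k-3}u\cdot D^{s-2}\nabla P_kv$ match naturally with two of the three Bony pieces of $D^{s-2}(\nabla f\cdot\nabla g)$, the nonlocal operator $D^{s-2}$ must be carefully commuted across the pointwise product $\nabla f\cdot\nabla g$, and the resonance piece of $D^{s-2}(\nabla f\cdot\nabla g)$ is entirely absent from $\mathcal{S}(f,g)$ and must be estimated by an independent high-high to low Coifman-Meyer/Littlewood-Paley bound. The hypothesis $s_1,s_2\leq 2$ is essential throughout, since it is precisely what allows the rescaled symbols involved to satisfy the CM bounds on each of the three cones.
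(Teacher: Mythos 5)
Your proposal is correct in outline but takes a genuinely different route from the paper. The paper does not pass through Theorem~\ref{Theorem:1.1} at all: it sets $B(f,g)=D^s(fg)-fD^sg-gD^sf+sD^{s-2}(\nabla f\cdot\nabla g)$, disposes of the diagonal piece $B_\sim$ by Lemma~\ref{Lemma:2.5}, and then splits the low--high piece $B_\ll$ into three parts --- a \emph{second-order} Taylor remainder $A^0_{s,\ll}(1)-A^0_{s,\ll}(0)-A^1_{s,\ll}(0)=\sum_{|\alpha|=2}T^\alpha_\ll(\partial^\alpha f,D^{s-2}g)$, the commutator $s D^{s-2}(\nabla f\cdot\nabla g)-s\nabla f\cdot D^{s-2}\nabla g$ rewritten as a \emph{first-order} Taylor remainder for $a_{s-2}$ applied to $(\partial_m f,\partial_m g)$, and the off-cone term $-gD^sf$ --- each of which is a Coifman--Meyer form to which Lemma~\ref{Lemma:3.2a} applies with $k=2$ (hence the constraint $s_1,s_2\le 2$). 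Your route instead quotes the theorem and cancels its paraproduct correction $\mathcal S(f,g)$ against the corollary's correction; the resulting error terms are in exact correspondence with the paper's pieces (your $m=1$ commutator is the paper's $B^{II}$, your $m\ge 2$ terms are the gap between the $\ell$-th and second-order Taylor remainders, your leftover high--low and resonance pieces of $fD^sg$, $gD^sf$, $D^{s-2}(\nabla f\cdot\nabla g)$ are the paper's $B^{III}$ and $B_\sim$), so the two arguments have the same analytic content; yours buys economy by reusing the theorem, while the paper's direct proof avoids having to re-identify the paraproduct sums with the pointwise correction terms. One caveat: your claim that the high--low and resonance remnants of $fD^sg$ become Coifman--Meyer symbols after dividing by $|\xi|^{s_1}|\eta|^{s_2}$ is not literally true for non-even $s_1$ (the factor $|\eta|^{s_1}|\xi|^{-s_1}$ on the cone $|\eta|\lesssim|\xi|$ loses smoothness at $\eta=0$, so high $\eta$-derivatives violate the CM bounds); these terms should instead be handled by the maximal-function/Littlewood--Paley mechanism of Lemmas~\ref{Lemma:2.2}, \ref{Lemma:2.4} and \ref{Lemma:2.5} (equivalently, the interpolation in Lemma~\ref{Lemma:3.2}), exactly as the paper does. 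With that substitution, and keeping track of the sign of $A^1_s(0)$ so that the $m=1$ sum cancels $-sD^{s-2}(\nabla f\cdot\nabla g)$ rather than doubling it, your argument goes through.
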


This article is organized as follows.
In Section \ref{section:2},
we collect some basic estimates
and key estimates for the commutators.
In Section \ref{section:3},
we prove Lemma \ref{Lemma:1.2}, Theorem \ref{Theorem:1.1} and
Corollaries \ref{Corollary:1.1}, and \ref{Corollary:1.2}.

\section{Preliminaries}
\label{section:2}
We collect some preliminary estimates needed in the proofs of the main results.
\def\baselinestretch{1}
For the purpose,  we introduce some notations.
Let $\mu(p) = \max \{p,(p-1)^{-1} \}$.
For $1 \le p \leq \infty$, $1 \le q \le \infty$, and $ s \in \mathbb R$,
let $\dot F_{p,q}^s = \dot F_{p,q}^s(\mathbb R^n)$
be the usual homogeneous Triebel-Lizorkin space with
	\[
	\| f \|_{\dot F_{p,q}^s}
	= \| (2^{sj} P_j f) \|_{L^p(l_j^q)}
	= \| \| (2^{sj} P_j f) \|_{l_j^q} \|_{L^p}.
	\]
It is well known that for $s \in \mathbb R$ and $1 < p < \infty$,
$\dot F_{p,2}^s$ may be identified with $\dot H_p^s$,
where $\dot H_p^s = D^{-s} L^p(\mathbb R^n)$
is the usual homogeneous Sobolev space
and $\dot F_{p,q}^s$ is continuously embedded into $\dot F_{p,\infty}^s$.
We also define the Hardy-Littlewood maximal operator by
	\[
	(M f)(x)
	= \sup_{r > 0} \frac{1}{|B(r)|} \int_{B(r)} |f(x+y)| dy,
	\]
where $B(r) = \{ \xi \in \mathbb R^n ;\thinspace |\xi| \le r\}$.
For $x=(x_1,\cdots,x_n) \in \mathbb R^n$,
we put $\langle x \rangle = ( 1 + |x|^2)^{1/2}$,
where $|x|^2 = x_1^2 + \cdots + x_n^2$.
We adopt the standard multi-index notation such as
$\partial^\alpha = \partial_1^{\alpha_1} \cdots \partial_n^{\alpha_n}$,
where $\partial_m = \partial/\partial x_m$, $m=1, \cdots, n$.

\begin{Lemma}[{\cite[Theorem 5.1.2]{bib:15}}]
\label{Lemma:2.1}
The estimates
	\[
	\mu(p)^{-1} \| f \|_{L^p}
	\leq \| f \|_{\dot F_{p,2}^{0}}
	\leq \mu(p) \| f \|_{L^p}
	\]
hold for $1 < p < \infty$ and $f \in L^p$.
\end{Lemma}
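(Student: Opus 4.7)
The plan is to prove this by the classical Littlewood--Paley equivalence at smoothness zero, combining Khintchine's inequality with a uniform Mihlin multiplier estimate, while tracking constants to obtain the sharp dependence $\mu(p)$. Write the square function as $Sf(x) = \bigl(\sum_{j \in \mathbb Z} |P_j f(x)|^2\bigr)^{1/2}$, so that $\|f\|_{\dot F_{p,2}^0} = \|Sf\|_{L^p}$, and let $(\epsilon_j)_{j \in \mathbb Z}$ denote independent Rademacher variables on an auxiliary probability space $(\Omega, \mathbb P)$.

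First, Khintchine's inequality gives, pointwise in $x$,
\[
Sf(x) \simeq \Bigl( \mathbb E_\epsilon \bigl| \sum_j \epsilon_j P_j f(x) \bigr|^p \Bigr)^{1/p},
\]
with universal constants. Raising to the $p$-th power, integrating in $x$, and invoking Fubini reduces the upper bound $\|Sf\|_{L^p} \leq \mu(p)\|f\|_{L^p}$ to controlling, uniformly in the signs $\epsilon$, the Fourier multiplier with symbol $m_\epsilon(\xi) = \sum_j \epsilon_j \hat\Phi(2^{-j}\xi)$. Since the summands have essentially disjoint frequency annuli and satisfy the Mihlin scaling $|\partial_\xi^\alpha m_\epsilon(\xi)| \leq C_\alpha |\xi|^{-|\alpha|}$ uniformly in $\epsilon$, a quantitative form of the Mihlin--H\"ormander multiplier theorem delivers an $L^p$ bound of size $\mu(p)$, which is the right-hand inequality.

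For the reverse inequality, I would argue by duality. Using the reproducing identity $f = \sum_j P_j \widetilde P_j f$ with $\widetilde P_j f = \widetilde\Phi_j \ast f$ from the introduction, I pair with an arbitrary $g \in L^{p'}$ and apply Cauchy--Schwarz in $j$:
\[
|\langle f, g \rangle| \leq \int_{\mathbb R^n} Sf(x) \cdot \widetilde S g(x) \, dx \leq \|Sf\|_{L^p} \, \|\widetilde S g\|_{L^{p'}},
\]
where $\widetilde S$ denotes the analogous square function built from the fattened projections $\widetilde P_j$. The upper bound applied at the conjugate exponent $p'$, together with the identity $\mu(p')=\mu(p)$, bounds $\|\widetilde S g\|_{L^{p'}}$ by $\mu(p)\|g\|_{L^{p'}}$; taking the supremum over $g$ concludes the argument. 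The principal obstacle is obtaining the explicit constant $\mu(p)$ rather than a generic $C_p$; this requires the sharp quantitative version of the Mihlin multiplier theorem, which is precisely what underlies [bib:15, Theorem 5.1.2].
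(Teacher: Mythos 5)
The paper does not actually prove Lemma \ref{Lemma:2.1}: it is quoted verbatim from Grafakos, so there is no internal argument to measure yours against. What you wrote is the standard proof behind the cited result (randomization via Khintchine, a uniform Mihlin bound for $m_\epsilon(\xi)=\sum_j\epsilon_j\hat\Phi(2^{-j}\xi)$, and duality through the reproducing identity $P_j=\widetilde P_jP_j$), and the architecture is sound. Two points need tightening. First, Khintchine's inequality is not a pointwise equivalence ``with universal constants'': the upper constant grows like $\sqrt p$ as $p\to\infty$. Your argument survives only because the direction you actually use, namely $\bigl(\sum_j|a_j|^2\bigr)^{1/2}\le A_p^{-1}\bigl(\mathbb E_\epsilon\bigl|\sum_j\epsilon_ja_j\bigr|^p\bigr)^{1/p}$ with $A_p^{-1}\le\sqrt2$ for $p\ge1$, is the universal one; you should state and use only that direction. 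Second, the identity $\mu(p')=\mu(p)$ is false: $\mu(3)=3$ while $\mu(3/2)=2$. The two quantities are comparable within an absolute factor (both are $\simeq\max\{p,p'\}$), and the duality step also picks up a harmless factor from replacing $\widetilde Sg$ by $Sg$ (pointwise $\widetilde Sg\le 5\,Sg$), so your scheme yields the two-sided equivalence with constants $C_n\,\mu(p)^{\pm1}$ --- which is all the paper ever needs (compare Lemma \ref{Lemma:2.4}, where a $C_n$ is displayed) --- but it does not, and cannot by Calder\'on--Zygmund methods, deliver the literal constant-free inequalities as Lemma \ref{Lemma:2.1} is printed. That is a defect of the paper's transcription of the reference rather than of your proof, but you should not claim the exact constant $\mu(p)$.
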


\begin{Lemma}[{\cite[Theorem 2.1.10]{bib:15}}]
\label{Lemma:2.2}
Let $s \ge 0$.
Then $x \cdot \nabla D^s \Psi \in L^1$.
The estimate
	\begin{align*}
	|D^s P_{\leq k} f(x)|
	\leq 2^{sk} \| x \cdot \nabla D^s \Psi\|_{L^1} Mf(x)
	\end{align*}
holds for any $f \in L_{\mathrm{loc}}^1$, $k \in \mathbb Z$, and $x \in \mathbb R^n$,
where $C$ depends only on $n$.
\end{Lemma}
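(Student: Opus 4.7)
My plan is to reduce the estimate to the single case $k=0$ by a dilation argument, then exploit the radial symmetry of $\Psi$ to dominate the convolution $(D^s\Psi)\ast f$ by $Mf$ via the fundamental theorem of calculus. For the reduction, the identity $\widehat{\Psi_k}(\xi)=\widehat\Psi(2^{-k}\xi)$ yields $D^s\Psi_k(x)=2^{(n+s)k}(D^s\Psi)(2^kx)$, so
\[
D^sP_{\le k}f(x)=2^{sk}\int(D^s\Psi)(z)f(x-2^{-k}z)\,dz,
\]
and the dilation invariance of $M$ transfers the problem to proving
\[
|(D^s\Psi)\ast f(x)|\le \|x\cdot\nabla D^s\Psi\|_{L^1}\,Mf(x).
\]

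For this base case, since $\hat\Phi$ is radial so is $\hat\Psi$, and hence $G:=D^s\Psi$ is radial; write $G(x)=g(|x|)$. Because $\hat G(\xi)=|\xi|^s\hat\Psi(\xi)$ is compactly supported, $G$ is smooth, and the non-smooth symbol $|\xi|^s$ at the origin produces the decay $|G(x)|+|x||\nabla G(x)|\le C|x|^{-n-s}$ at infinity by a Paley--Wiener type argument. This simultaneously yields $g(\rho)\to 0$ as $\rho\to\infty$ and the $L^1$-integrability of $x\cdot\nabla G$ asserted in the lemma. Writing $G(x)=-\int_{|x|}^\infty g'(\rho)\,d\rho$ and applying Fubini turns the convolution into
\[
(G\ast f)(x)=-\int_0^\infty g'(\rho)\int_{|x-y|\le\rho}f(y)\,dy\,d\rho.
\]
Bounding the inner integral by $|B(\rho)|Mf(x)$ and using the polar coordinate identities $x\cdot\nabla G(x)=\rho g'(\rho)$ and $|B(\rho)|=|S^{n-1}|\rho^n/n$ shows that
\[
\int_0^\infty|g'(\rho)||B(\rho)|\,d\rho=\frac{1}{n}\|x\cdot\nabla G\|_{L^1},
\]
so the desired pointwise estimate follows, with a constant even slightly smaller than the stated one.

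The delicate step is the verification $x\cdot\nabla D^s\Psi\in L^1(\mathbb R^n)$. Local integrability is immediate from the smoothness of $G$, but at infinity one only has the polynomial decay $|\nabla G(x)|\le C|x|^{-n-s-1}$ dictated by the H\"older-type singularity of $|\xi|^s$ at the origin; the extra factor of $|x|$ then yields $|x\cdot\nabla G(x)|\le C|x|^{-n-s}$, which is integrable at infinity for every $s\ge 0$. Establishing this decay and extracting the precise rate is the main technical obstacle; once it is in hand, the rest of the proof is a routine application of the standard maximal function domination for radial convolution kernels.
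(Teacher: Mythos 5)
Your proof is correct and takes essentially the same route as the paper's: reduce to $k=0$ by the dilation identity $D^sP_{\le k}=2^{sk}S^*_{2^k}D^sP_{\le 0}S_{2^k}$ together with $S^*_{2^k}MS_{2^k}=M$, then dominate the radial convolution by $Mf$ using the fundamental theorem of calculus in the radial variable (your Fubini computation with $G(x)=-\int_{|x|}^\infty g'(\rho)\,d\rho$ is the same identity the paper obtains by integration by parts in polar coordinates, and you even recover the sharper constant $\frac1n\|x\cdot\nabla D^s\Psi\|_{L^1}$). One nitpick: at the endpoint $s=0$ the claimed decay $|x\cdot\nabla G(x)|\le C|x|^{-n-s}$ is not integrable at infinity, but there $|\xi|^s\hat\Psi(\xi)=\hat\Psi(\xi)$ is smooth and compactly supported, so $G=\Psi\in\mathcal S$ and the conclusion is immediate (the paper handles the $L^1$ claim separately in a remark via $\|x\cdot\nabla D^s\Psi\|_{L^1}\le (n+s)\|D^s\Psi\|_{L^1}+\|D^s\nabla(x\Psi)\|_{L^1}$ and Besov embeddings).
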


\begin{proof}
For completeness, we give its proof here:
Recall that $(\Psi_k)$ and $\Psi$ are radial Schwartz functions satisfying
	\[
	\widehat{P_{\leq k} f}(\xi)
	= \widehat \Psi_k (\xi) \widehat f(\xi),\quad
	\widehat{P_{\leq 0} f}(\xi) = \widehat \Psi(\xi) \widehat f(\xi).
	\]
Using a rescaling argument, combined with the relation
	\[
	D^s P_{\leq k}
	= D^s S^*_{2^k} P_{\leq 0}S_{2^k}
	= 2^{sk} S^*_{2^k} D^s P_{\leq 0}S_{2^k},\quad
	S_{2^k}^\ast M S_{2^k} = M,
	\]
one can reduce the proof of Lemma \ref{Lemma:2.2} to the case when $k=0$,
where $S_{2^k} f = f(2^{-k} x)$
and $S_{2^k}^\ast f = f(2^{k} x)$.
Let $\rho \in C^\infty([0,\infty);[0,1])$ satisfy
	\[
	\rho =
	\begin{cases}
	1& \mathrm{if} \quad 0 \leq x \leq 1/2,\\
	\searrow & \mathrm{if} \quad 1/2 < x < 1,\\
	0& \mathrm{if} \quad x \geq 1,
	\end{cases}
	\]
and $\rho_R(\cdot) = \rho(\cdot/R)$ for any $R>0$.
Let
	\[
	F_x(r)= \int_{S^{n-1}} f(x+r\omega) d \omega,\qquad
	G_x(r)= \int_0^r F_x(r') r'^{n-1} dr'.
	\]
Since $\Psi$ and $D^s \Psi$ are radial functions,
it is useful to introduce the notation $\psi_s (|\cdot|) = D^s \Psi(\cdot)$.
By integration by parts,
	\begin{align*}
	| D^s P_{\leq 0} f|
	&= \lim_{R \to \infty} \left|\int f(x+y) \rho_R(|y|) D^s \Psi(y) dy \right|\\
	&= \lim_{R \to \infty} \left|\int_0^R F_x(r) r^{n-1} \rho_R(r) \psi_s(r) dr \right|\\
	&= \lim_{R \to \infty} \bigg|
	\underbrace{G_x(R) \rho_R(R) \psi_s(R)}_{=0}
	- \underbrace{G_x(0) \rho_R(0) \psi_s(0)}_{=0}
	- \int_0^R G_x(r) \frac{d}{dr} ( \rho_R \psi_s) (r) dr \bigg|\\
	&\leq |S^{n-1}| \int_0^\infty r^{n-1}
	\bigg| r \frac{d}{dr} \psi_s(r) \bigg| d r M f(x)\\
	&= \int_{\mathbb R^n}
	| x \cdot \nabla D^s \Psi(x) | d x\ M f(x).
	\end{align*}
\end{proof}

\begin{Remark}
One can show that $\| x \cdot \nabla D^s \Psi\|_{L^1}$ is bounded as follows:
	\begin{align*}
	\int_{\mathbb R^n}
	| x \cdot \nabla D^s \Psi(x) | d x
	&= \int_{\mathbb R^n}
	| (n+s) D^s \Psi(x) + D^s \nabla (x \Psi )(x) | d x\\
	&\leq (n+s) \| D^s \Psi \|_{L^1}
	+ \| D^s \nabla (x \Psi )\|_{L^1}.
	\end{align*}
For any $s \geq 0$,
	\[
	\| D^s \Psi \|_{L^1}
	\leq C \| \Psi \|_{\dot B_{1,1}^{s}}
	\leq C ( \| \Psi \|_{\dot B_{1,\infty}^{0}}
	+ \| \Psi \|_{\dot B_{1,\infty}^{2 \lceil s/2 \rceil }})
	\leq C \| \Psi \|_{H_{1}^{2 \lceil s/2 \rceil}},
	\]
where $\lceil s \rceil = \min\{ a \in \mathbb Z ;\thinspace a \geq s\}$.
Moreover, since
$\mathrm{supp} \thinspace \nabla \hat \Psi \subset \mathbb R^n \backslash B(1)$,
$D^s \nabla ( x \Psi) \in \mathcal S$
and $\| D^s \nabla (x \Psi)\|_{L^p} < \infty$.

\end{Remark}

\begin{Lemma}[{\cite[Theorem 2.1.6]{bib:15}}]
Let $1 < p \leq \infty$ and $f \in L^p(\mathbb R^n)$.
Then the estimate
	\[
	\| M f \|_{L^p}
	\leq 3^{n/p} p' \| f \|_{L^p}
	\]
holds.
\end{Lemma}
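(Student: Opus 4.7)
The plan is to establish the bound by the classical Marcinkiewicz interpolation route: a weak-type $(1,1)$ estimate for $M$ via a covering argument, combined with the trivial $L^\infty$ bound $\|Mf\|_{L^\infty}\le \|f\|_{L^\infty}$, optimized to yield the sharp-looking constant $3^{n/p} p'$.

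First I would prove the weak-type $(1,1)$ inequality $|\{x:Mf(x)>\lambda\}|\le 3^{n}\lambda^{-1}\|f\|_{L^1}$. For every point $x$ of the level set there exists by definition of $M$ an open ball $B_x$ centered at $x$ with $\int_{B_x}|f|>\lambda|B_x|$. By an exhaustion argument the radii can be assumed uniformly bounded. I would then invoke the Vitali covering lemma to extract a countable, pairwise disjoint subfamily $\{B_{x_k}\}$ such that the $3$-fold dilates cover the level set. Summing $|3B_{x_k}|=3^{n}|B_{x_k}|\le 3^{n}\lambda^{-1}\int_{B_{x_k}}|f|$ and using disjointness produces the desired inequality with constant $3^{n}$. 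The $L^\infty$ bound is immediate since averages over balls never exceed $\|f\|_{L^\infty}$.

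Next, I would interpolate using the layer-cake representation $\|Mf\|_{L^p}^p = p\int_0^\infty \lambda^{p-1}|\{Mf>\lambda\}|\,d\lambda$. For a parameter $\alpha\in(0,1)$ to be optimized at the end, split $f=g_\lambda+h_\lambda$ with $g_\lambda=f\mathbf{1}_{\{|f|>\alpha\lambda\}}$ and $h_\lambda=f-g_\lambda$. Since $\|h_\lambda\|_{L^\infty}\le\alpha\lambda$, one has $Mh_\lambda\le\alpha\lambda$ pointwise, so $\{Mf>\lambda\}\subset\{Mg_\lambda>(1-\alpha)\lambda\}$; the weak-$(1,1)$ bound then gives $|\{Mf>\lambda\}|\le 3^{n}((1-\alpha)\lambda)^{-1}\int_{\{|f|>\alpha\lambda\}}|f|$. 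Plugging this into the layer-cake formula and exchanging the order of integration via Fubini, the $\lambda$-integral reduces to $\int_0^{|f(x)|/\alpha}\lambda^{p-2}\,d\lambda$, producing a factor $(p-1)^{-1}\alpha^{1-p}|f(x)|^{p-1}$, so that
\[
\|Mf\|_{L^p}^p \le \frac{p\cdot 3^{n}}{(p-1)(1-\alpha)\alpha^{p-1}}\,\|f\|_{L^p}^p.
\]

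The final and most delicate step is the optimization of the prefactor in $\alpha$. Minimizing $(1-\alpha)\alpha^{p-1}$ on $(0,1)$ shows that the optimal choice is $\alpha=(p-1)/p$, which after taking the $p$-th root collapses the constant to $3^{n/p}p'$ up to the expected shape; this is the step I expect to be the main obstacle, since any loose bookkeeping in the splitting or in the interpolation produces a spurious factor of $p^{1/p}$ or $2^{1/p}$ and destroys the stated constant. For $p=\infty$ the statement reduces to the trivial $L^\infty$ estimate and requires no interpolation.
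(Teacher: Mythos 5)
First, a remark on the comparison itself: the paper offers no proof of this lemma --- it is quoted directly from Grafakos \cite[Theorem 2.1.6]{bib:15} --- so there is no in-paper argument to measure yours against. Your outline is the standard route (Vitali covering $\Rightarrow$ weak $(1,1)$ bound with constant $3^n$ for the centered maximal function, the trivial $L^\infty$ bound, then layer-cake interpolation with a height-splitting parameter $\alpha$), and everything up to and including your displayed inequality
\[
\| M f \|_{L^p}^p \le \frac{p\, 3^{n}}{(p-1)(1-\alpha)\,\alpha^{p-1}}\, \| f \|_{L^p}^p
\]
is correct; note also that applying the weak $(1,1)$ bound only to $g_\lambda = f\mathbf{1}_{\{|f|>\alpha\lambda\}}$ is the right move, since $g_\lambda\in L^1$ even when $f$ is merely in $L^p$.

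The gap is exactly where you feared, but it is structural rather than a matter of bookkeeping. Maximizing $(1-\alpha)\alpha^{p-1}$ over $\alpha\in(0,1)$ does give $\alpha=(p-1)/p$, but the maximal value is $\tfrac{1}{p}\bigl(\tfrac{p-1}{p}\bigr)^{p-1}$, so the best constant your scheme can produce is $\bigl(3^{n} p\, (p')^{p}\bigr)^{1/p}=3^{n/p}\,p^{1/p}\,p'$. The factor $p^{1/p}$ does not ``collapse'' for any choice of $\alpha$: the requirement $(1-\alpha)\alpha^{p-1}\ge \bigl(\tfrac{p-1}{p}\bigr)^{p-1}$, which is what the stated constant would force, exceeds the maximum of the left-hand side by exactly a factor of $p$. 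Hence the splitting-plus-layer-cake argument, as proposed, proves the lemma only with the weaker constant $3^{n/p}p^{1/p}p'$; reaching $3^{n/p}p'$ requires an extra input beyond generic interpolation (e.g.\ a sharpened endpoint Marcinkiewicz lemma or a self-improving property of the level sets of $Mf$), which is why the paper defers to the cited source. For the purposes of this paper the discrepancy is immaterial --- the lemma is only invoked to know that $M$ is bounded on $L^p$ with a constant depending on $n$ and $p$, and since $p^{1/p}\le e^{1/e}$ your argument delivers that --- but as a proof of the literal statement it falls short at the optimization step.
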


\begin{Lemma}[Fefferman-Stein\cite{bib:12}{\cite[Theorem 1.2]{bib:15}}]
\label{Lemma:2.4}
Let $(f_j)_{j \in \mathbb Z}$ be a sequence of mesurable functions on $\mathbb R^n$.
Let $1 < p < \infty$ and $ 1 < q \leq \infty$.
Then the estimate
	\[
	\| (M f_j) \|_{L^p(l_j^q)}
	\leq C_{n} \mu(p) \mu(q) \| (f_j) \|_{L^p(l_j^q)}
	\]
holds.
\end{Lemma}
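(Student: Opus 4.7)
My plan is to build the Fefferman--Stein bound from the scalar maximal inequality (the preceding lemma) via a Calder\'{o}n--Zygmund weak-type argument, combined with two easy endpoint computations and Marcinkiewicz interpolation.

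The two tractable endpoints are $q=\infty$ and $p=q$. For $q=\infty$, since $|f_j(y)|\leq\sup_k|f_k(y)|$ pointwise, monotonicity of $M$ gives $Mf_j(x)\leq M(\sup_k|f_k|)(x)$, and the scalar $L^p$ bound applied to $\sup_k|f_k|$ settles the case with constant $\lesssim\mu(p)$. For $p=q\in(1,\infty)$ the estimate is a termwise Fubini reduction,
$$\|(Mf_j)\|_{L^q(\ell^q)}^q=\sum_j\|Mf_j\|_{L^q}^q\leq(3^{n/q}q')^q\|(f_j)\|_{L^q(\ell^q)}^q.$$

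The central step is the vector-valued weak-$(1,1)$ estimate
$$\bigl|\{x:\|(Mf_j(x))_j\|_{\ell^q}>\lambda\}\bigr|\leq\frac{C_n}{\lambda}\int\|(f_j(y))_j\|_{\ell^q}\,dy.$$
I would prove this by applying the Calder\'{o}n--Zygmund decomposition at height $\lambda$ to the scalar majorant $g(x):=\|(f_j(x))_j\|_{\ell^q}$, producing disjoint cubes $\{Q_k\}$ with $g\leq\lambda$ almost everywhere on $(\bigcup Q_k)^c$ and $\sum|Q_k|\leq\lambda^{-1}\|g\|_{L^1}$. Splitting each component $f_j=f_j^{\mathrm{good}}+f_j^{\mathrm{bad}}$ via cube-averages, Minkowski's inequality controls the $\ell^q$ norm of the good part pointwise by $C\lambda$, so that the diagonal $L^q(\ell^q)$ bound above together with Chebyshev handles it; the bad-part exceptional set is absorbed into $\bigcup 2Q_k$ (whose total measure is already of the right order), and a rough pointwise majorization of $M$ applied to a zero-mean bump with controlled $L^1$ mass on $Q_k$ completes the estimate outside the dilates. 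Marcinkiewicz interpolation between this weak-$(1,1)$ bound and the diagonal $(q,q)$ bound then yields the $L^p(\ell^q)$ estimate for $1<p\leq q$ with constant $\lesssim\mu(p)$.

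The main obstacle is the complementary range $q<p<\infty$: no pointwise $\ell^q$-valued bound of $(Mf_j)_j$ against $M$ of a single scalar function is available, so one cannot simply interpolate up to $p=\infty$. I would close this range via the Fefferman--Stein linearization: for each pair $(x,j)$, measurable selection of a near-maximizing radius $r_j(x)$ replaces $M$ by a linear $\ell^q$-valued averaging operator $\tilde M$ satisfying $\tilde M\lesssim M\lesssim 2\tilde M$ pointwise, and the associated operator-valued kernel fits the framework of vector-valued Calder\'{o}n--Zygmund theory. The resulting $L^p(\ell^q)$ boundedness covers all $1<p<\infty$, and tracking constants through the interpolation and the CZ bounds accounts for the symmetric prefactor $\mu(p)\mu(q)$ in $p,p'$ and $q,q'$.
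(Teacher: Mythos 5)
First, a point of reference: the paper does not prove Lemma \ref{Lemma:2.4} at all --- it is quoted verbatim from Fefferman--Stein \cite{bib:12} and Grafakos \cite{bib:15}, so there is no in-paper argument to compare against. Your proposal is therefore judged on its own terms, as a reconstruction of the classical proof. The overall architecture (endpoints $q=\infty$ and $p=q$, a vector-valued weak $(1,1)$ bound, Marcinkiewicz interpolation for $1<p\leq q$, and vector-valued Calder\'on--Zygmund theory for $q<p<\infty$) is the right one, and the $q=\infty$, $p=q$, good-part, and $\bigcup 2Q_k$ steps are all correct as you describe them.

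The genuine gap is the bad-part estimate in the weak $(1,1)$ inequality. You propose to conclude via ``a rough pointwise majorization of $M$ applied to a zero-mean bump with controlled $L^1$ mass on $Q_k$.'' But the Hardy--Littlewood maximal operator carries absolute values inside the average, so the vanishing mean of $b_{j,k}$ buys nothing: for $x\notin Q_k^{*}$ one only gets $Mb_{j,k}(x)\lesssim \|b_{j,k}\|_{L^1}\,|x-c_k|^{-n}$, and indeed this is sharp. These tails are not integrable outside the dilated cubes, so Chebyshev is unavailable, and after taking the $\ell^q_j$ norm (via Minkowski) you are left with controlling $\bigl|\{x: \sum_k |Q_k|\,|x-c_k|^{-n}>c\}\bigr|$ by $\sum_k|Q_k|$ --- which fails in general, because the $L^{1,\infty}$ quasinorm is not countably subadditive (one loses a logarithm in the number of cubes). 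The standard repairs both inject kernel regularity rather than cancellation of $b$: either dominate $M$ by a smooth maximal operator $\sup_{t>0}\varphi_t\ast|f|$ with $\varphi$ a Schwartz bump, so that $|\nabla\varphi_t(z)|\leq C|z|^{-n-1}$ uniformly in $t$ and the mean-zero trick genuinely produces the extra factor $\ell(Q_k)/|x-c_k|$; or run everything through the Banach-valued Calder\'on--Zygmund theorem. Note that the same caveat infects your treatment of the range $q<p<\infty$: the linearized operator with the rough kernel $(|B(0,r_j(x))|^{-1}\chi_{B(0,r_j(x))}(x-y))_j$ does not satisfy the H\"ormander condition uniformly in the measurable radius selection, so the smoothing step is needed there as well. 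Once it is in place, the vector-valued CZ theorem yields the weak $(1,1)$ and all of $1<p<\infty$ in one stroke, which would let you delete the hand-run Calder\'on--Zygmund decomposition entirely; extracting the precise constant $C_n\,\mu(p)\mu(q)$ then still requires tracking the endpoint constants ($q'$ at the diagonal, $(p-1)^{-1}$ from Marcinkiewicz) rather than asserting it.
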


\begin{Lemma}
\label{Lemma:2.5}
Let $s_1,s_2,s_3,s_4,s_5$ be non-negative real numbers
satisfying $s_1+s_2+s_3=s_4+s_5$
and let $1 < p, p_1, p_2 < \infty$
satisfy $1/p = 1/p_1 + 1/p_2$.
Then
	\[
	\bigg\| D^{s_1} \sum_{j \in \mathbb Z} P_j D^{s_2} f P_j D^{s_3} g \bigg\|_{L^p}
	\leq C p \mu(p_1) \mu(p_2) \|f\|_{\dot F_{p_1,2}^{s_4}} \|g\|_{\dot F_{p_2,2}^{s_5}}.
	\]
\end{Lemma}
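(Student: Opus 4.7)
The plan is to reduce the estimate to a pointwise control of
$h_j := P_j D^{s_2} f \cdot P_j D^{s_3} g$
by Hardy--Littlewood maximals of the natural Littlewood--Paley pieces
$F_j := 2^{s_4 j} P_j f$ and $G_j := 2^{s_5 j} P_j g$,
and then to close the argument with Lemma \ref{Lemma:2.1} and the Fefferman--Stein vector inequality (Lemma \ref{Lemma:2.4}). The convolution kernel of $D^{s_2} P_j$ is $2^{s_2 j}$ times a rescaled Schwartz function (because $|\xi|^{s_2}\widehat{\widetilde\Phi}(\xi)$ is smooth and compactly supported away from the origin), so the standard pointwise bound gives $|P_j D^{s_2} f(x)| \leq C\, 2^{s_2 j} M P_j f(x) = C\, 2^{(s_2-s_4) j} MF_j(x)$, and analogously for $P_j D^{s_3} g$. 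Multiplying these and invoking the balance $s_1+s_2+s_3 = s_4+s_5$ yields the key pointwise estimate
\[
|h_j(x)| \leq C\, 2^{-s_1 j}\, MF_j(x)\, MG_j(x). \qquad (\star)
\]

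If $s_1 = 0$, the bound $(\star)$ combined with pointwise Cauchy--Schwarz in $j$, H\"older in $L^p$ with $1/p = 1/p_1 + 1/p_2$, and one application of Lemma \ref{Lemma:2.4} to each factor immediately gives the claim. For $s_1 > 0$, I would first invoke Lemma \ref{Lemma:2.1} to write
\[
\bigg\|D^{s_1}\sum_j h_j\bigg\|_{L^p} \leq \mu(p) \bigg\|\bigg(2^{s_1 k} P_k \sum_j h_j\bigg)_k\bigg\|_{L^p(\ell^2_k)}.
\]
Since $\widehat{h_j}$ sits in $\{|\xi| \leq 2^{j+2}\}$ the inner sum reduces to $j \geq k-3$; combining $|P_k h_j(x)| \leq C Mh_j(x)$ with $(\star)$ and setting $A_j(x) := M(MF_j \cdot MG_j)(x)$, the substitution $i := j-k$ produces
\[
2^{s_1 k}\bigg|P_k\sum_j h_j(x)\bigg| \leq C \sum_{i\geq -3} 2^{-s_1 i} A_{k+i}(x).
\]
The weight $2^{-s_1 i}$ is summable precisely because $s_1 > 0$, so Minkowski in $\ell^2_k$ (after reindexing $k\mapsto k+i$) yields $\|(2^{s_1 k} P_k \sum_j h_j)_k\|_{L^p(\ell^2)} \leq C \|(A_j)_j\|_{L^p(\ell^2)}$.

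Both cases conclude identically: Lemma \ref{Lemma:2.4} peels the outer $M$ from $A_j$; the pointwise sequence inequality $\sum_j a_j^2 b_j^2 \leq (\sum_j a_j^2)(\sum_j b_j^2)$ together with H\"older in $L^p$ splits $\|(MF_j \cdot MG_j)_j\|_{L^p(\ell^2)}$ into $\|(MF_j)_j\|_{L^{p_1}(\ell^2)} \|(MG_j)_j\|_{L^{p_2}(\ell^2)}$; and a second application of Fefferman--Stein to each factor replaces $MF_j, MG_j$ by $F_j, G_j$, whose $L^{p_i}(\ell^2_j)$-norms are by construction $\|f\|_{\dot F_{p_1,2}^{s_4}}$ and $\|g\|_{\dot F_{p_2,2}^{s_5}}$. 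The main obstacle is the geometric summation over $i = j-k$: it only converges for $s_1 > 0$ and carries an $s_1$-dependent constant, which is what forces the case split; the remaining subtlety is tracking the factors of $\mu(p), \mu(p_1), \mu(p_2)$ coming from the various applications of Lemmas \ref{Lemma:2.1} and \ref{Lemma:2.4} so that the final constant lines up with the advertised $C p \mu(p_1)\mu(p_2)$.
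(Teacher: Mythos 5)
Your proof is correct in substance, but it takes a genuinely different route from the paper's. The paper argues by duality: it pairs the left-hand side with $h\in L^{p'}$, uses the fact that each product $P_jD^{s_2}f\cdot P_jD^{s_3}g$ has Fourier support in $\{|\xi|\le 2^{j+2}\}$ to write its image under $D^{s_1}$ as a convolution with $D^{s_1}\Psi_{j+2}$, throws that kernel onto $h$ by self-adjointness, and invokes Lemma \ref{Lemma:2.2} to get the pointwise bound $|D^{s_1}\Psi_{j+2}|\ast|h|\le C\,2^{s_1 j}Mh$; a single Cauchy--Schwarz in $j$, H\"older with exponents $p_1,p_2,p'$, the $L^{p'}$-boundedness of $M$ (the source of the factor $p$), and Lemma \ref{Lemma:2.4} applied to the two square functions then finish the proof. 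You instead estimate $D^{s_1}\sum_j h_j$ directly through the square-function characterization of $\dot H^{s_1}_p$ (strictly speaking this is the lifting property $\|D^{s_1}H\|_{L^p}\simeq\|H\|_{\dot F^{s_1}_{p,2}}$ rather than Lemma \ref{Lemma:2.1} itself, but the paper records that identification), observe that $P_kh_j=0$ unless $j\ge k-3$, and sum a geometric series in $j-k$. Both routes are sound, and your pointwise bound $|P_jD^{s_2}f|\le C2^{s_2 j}MP_jf$ via the rescaled Schwartz kernel $D^{s_2}\widetilde\Phi_j$ is exactly the kind of estimate the paper relies on. What the duality argument buys is that the maximal function of $h$ absorbs all output frequencies $\lesssim 2^{j}$ at once: there is no tail sum over $k\le j+3$, hence no case split at $s_1=0$, no summability constant $(1-2^{-s_1})^{-1}$, and one fewer application of Fefferman--Stein (so no extra $\mu(p)$ factors), which is how the stated constant $Cp\,\mu(p_1)\mu(p_2)$ comes out cleanly. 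Your version proves the inequality as stated, but the constant it produces carries additional dependence on $p$ through $\mu(p)^2$ and on $s_1$ through the geometric series, so matching the advertised constant exactly would take extra care.
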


\begin{proof}
By the H\"older and Fefferman-Stein inequalities,
for any $h \in L^{p'}$,
	\begin{align*}
	&\bigg| \int_{\mathbb R^n}  D^{s_1} \sum_{j \in \mathbb Z}
	P_j D^{s_2} f(x) P_j D^{s_3} g(x) h(x) dx \bigg|\\
	&= \sum_{j \in \mathbb Z} \int_{\mathbb R^n} \int_{\mathbb R^n}
	| (D^{s_1} \Psi_{j+2} (x-y) D^{s_2} P_{j} f (y)
	D^{s_3}P_j g (y) h(x) | dy dx\\
	&\leq \int_{\mathbb R^n} \sum_{j \in \mathbb Z}
	| D^{s_1} \Psi_{j+2}| \ast |h| (y)
	| D^{s_2} P_{j} f (y) D^{s_3} P_j g (y) | dy\\
	&\leq C p \Big\| \| 2^{s_4 j} M P_j f (y) \|_{l_j^2}
	\|2^{s_5 j} M P_j g \|_{l_j^2} \Big\|_{L^p} \| h \|_{L^{p'}}\\
	&\leq C p \mu(p_1) \mu(p_2) \| h \|_{L^{p'}}
	\|f\|_{\dot F_{p_1,2}^{s_1}} \|g\|_{\dot F_{p_2,2}^{s_2}}.
	\end{align*}
\end{proof}

Recall the definition of the H\"ormander class $S^s.$
\begin{Definition}
\label{Definition:2}
Let $s \in \mathbb R$.
We say that a symbol
	\[
	\sigma \in C^\infty(\mathbb{R}^n \setminus \{0\})
	\]
belongs to the H\"ormander class $S^s$,
if for all multi-indices  $\alpha$,
we have
	\[
	|\partial_{\xi}^\alpha \sigma(\xi)|\leq C_\alpha  |\xi|^{s-|\alpha|},
	\quad \forall \xi \neq 0.
	\]
\end{Definition}

\begin{Lemma}
\label{Lemma:2.6}
Let $s \geq 0$.
If $a \in S^s$,
then for all multi-indices $\alpha, \beta$ and $(\xi,\eta) $ in the cone $ \Gamma$, defined in \eqref{eq.ga}, we have
	\[
	\sup_{0 \leq \theta \leq 1}
	| \partial_\xi^\alpha \partial_\eta^\beta a(\eta + \theta \xi) |
	\leq C_{\alpha + \beta} |\eta|^{s-|\alpha|-|\beta|}.
	\]
\end{Lemma}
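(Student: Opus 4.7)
The plan is to reduce the claim to the defining estimate of the H\"ormander class $S^s$ applied to a single higher-order derivative of $a$, evaluated at the point $\zeta = \eta + \theta\xi$. The key observation is that on the cone $\Gamma$ one has $|\zeta| \simeq |\eta|$ uniformly in $\theta \in [0,1]$, which lets one absorb the power $|\zeta|^{s-|\alpha|-|\beta|}$ coming from the $S^s$ bound into $|\eta|^{s-|\alpha|-|\beta|}$.

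First I would compute $\partial_\xi^\alpha \partial_\eta^\beta a(\eta + \theta\xi)$ by the chain rule. Since the map $(\xi,\eta) \mapsto \eta + \theta\xi$ is linear with $\partial_{\xi_i}(\eta+\theta\xi) = \theta e_i$ and $\partial_{\eta_j}(\eta+\theta\xi) = e_j$, repeated differentiation yields the clean identity
\[
\partial_\xi^\alpha \partial_\eta^\beta a(\eta + \theta\xi)
= \theta^{|\alpha|} \, (\partial^{\alpha+\beta} a)(\eta + \theta\xi),
\]
and in particular $|\theta^{|\alpha|}| \leq 1$ for $\theta \in [0,1]$.

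Next I would verify the two-sided equivalence $|\eta + \theta\xi| \simeq |\eta|$ on $\Gamma$. By the triangle inequality and the cone condition $|\xi| \leq |\eta|/2$, one has, uniformly in $\theta \in [0,1]$,
\[
\tfrac{1}{2}|\eta| \;\leq\; |\eta| - |\xi| \;\leq\; |\eta + \theta\xi| \;\leq\; |\eta| + |\xi| \;\leq\; \tfrac{3}{2}|\eta|.
\]
Feeding this into the $S^s$ bound $|\partial^{\alpha+\beta} a(\zeta)| \leq C_{\alpha+\beta} |\zeta|^{s-|\alpha|-|\beta|}$ at $\zeta = \eta+\theta\xi$, and splitting according to the sign of $s-|\alpha|-|\beta|$ (so as to pick the worse of the two endpoints $\tfrac{1}{2}|\eta|$ and $\tfrac{3}{2}|\eta|$), yields
\[
|\partial^{\alpha+\beta} a(\eta + \theta\xi)| \leq C_{\alpha+\beta}' |\eta|^{s-|\alpha|-|\beta|}
\]
with a constant depending only on $\alpha+\beta$ and $s$. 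Combining with $\theta^{|\alpha|} \leq 1$ and taking the supremum over $\theta \in [0,1]$ delivers the stated inequality.

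There is no real obstacle here: the argument is a direct chain-rule computation coupled with an elementary triangle-inequality estimate on $\Gamma$. The only point requiring a small amount of care is the case analysis on the sign of $s - |\alpha| - |\beta|$, so that the passage from $|\eta + \theta\xi|^{s-|\alpha|-|\beta|}$ to a constant multiple of $|\eta|^{s-|\alpha|-|\beta|}$ is uniform for all admissible $\alpha,\beta$ without assuming $s \geq |\alpha|+|\beta|$.
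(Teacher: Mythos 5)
Your proposal is correct and follows essentially the same route as the paper's own proof: the chain-rule identity $\partial_\xi^\alpha \partial_\eta^\beta a(\eta+\theta\xi) = \theta^{|\alpha|}(\partial^{\alpha+\beta}a)(\eta+\theta\xi)$ together with the two-sided bound $\tfrac12|\eta| \leq |\eta+\theta\xi| \leq \tfrac32|\eta|$ on $\Gamma$. Your additional remark about handling the sign of $s-|\alpha|-|\beta|$ is a small point of care the paper leaves implicit.
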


\begin{proof}
For $\alpha, \beta \in \mathbb{N}_0^n$,
	\[
	\partial_\xi^\alpha \partial_\eta^\beta a(\eta + \theta \xi)
	= (\partial_\eta^{\alpha + \beta} a)(\eta + \theta \xi) \theta^{|\alpha|}.
	\]
and for $(\xi,\eta) \in \Gamma$,
	\[
	\frac{1}{2} |\eta|
	\leq |\eta + \theta \xi|
	\leq \frac{3}{2} |\eta|.
	\]
The required estimate is established and  the proof is complete.
\end{proof}


\section{Proofs of Lemma \ref{Lemma:1.2}, Theorem \ref{Theorem:1.1},
and Corollaries \ref{Corollary:1.1} and \ref{Corollary:1.2}.}
\label{section:3}
\begin{proof}[Proof of Lemma \ref{Lemma:1.2}]
Since $|\cdot|^s \in S^s$ and Lemma \ref{Lemma:2.6},
we are done.
\end{proof}

To prove Theorem \ref{Theorem:1.1},
Corollaries \ref{Corollary:1.1} and \ref{Corollary:1.2},
we introduce the following notation.
For bilinear operator $B$,
defined by
	\[
	B(F,G)(x) = \int_{\mathbb{R}^n} \int_{\mathbb{R}^n}
	e^{i x(\xi+\eta)} b(\xi,\eta) \widehat F(\xi) \widehat G(\eta) d\xi d\eta,
	\]
we can define
	\begin{align*}
	B_{\ll} (f,g) = \sum_{k \in \mathbb Z} B( P_{\leq k-3} f, P_{k}g),\quad
	B_{\sim} (f,g) = \sum_{j \in \mathbb Z} \sum_{k=j-2}^{j+2} B( P_j f, P_{k}g).
	\end{align*}
Obviously, we have the decomposition
	\begin{equation}\label{eq:3.1}
	B (f,g) =  B_{\ll} (f,g) + B_{\sim} (f,g) + B_{\ll} (g,f)
	\end{equation}
and the symbol $b_{\ll} (\xi,\eta)$ of $ B_{\ll}$ is defined by
	\begin{equation}\label{eq:3.2}
	b_{\ll} (\xi,\eta)
	= \sum_{k \in \mathbb{Z}} \widehat \Psi_{k-3}(\xi) \widehat \Phi_k(\eta) b(\xi,\eta).
	\end{equation}

We have the following useful property.
\begin{Lemma}
\label{Lemma:3.2a}
Let $s \ge k \geq 0$ and $s_1, s_2$ are non-negative real numbers
satisfying
	\[
	s_1\leq k, \ s_1+s_2=s
	\]
and let $1 < p, p_1, p_2 < \infty$
satisfy $1/p = 1/p_1 + 1/p_2$.
Then the bilinear form $B_{\ll} (f,g)$
with symbol of type \eqref{eq:3.2} with $b$ in the Coifman - Meyer class satisfies
	\begin{align}
	\sup_{|\alpha|=k}
	\| B_\ll(\partial^\alpha f,D^{s-k}g) \|_{L^p}
	\leq C \|D^{s_1}f\|_{L^{p_1}} \|D^{s_2} g\|_{L^{p_2}} .
	\label{eq:3.4}
	\end{align}
\end{Lemma}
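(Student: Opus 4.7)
The plan is to decompose $B_\ll(\partial^\alpha f, D^{s-k}g)$ into Littlewood-Paley pieces, estimate each piece pointwise using variants of Lemma \ref{Lemma:2.2}, and combine using the Fefferman-Stein inequality and the Littlewood-Paley characterization of $L^p$.

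First I would write
\[
B_\ll(\partial^\alpha f, D^{s-k}g) = \sum_{k_0 \in \mathbb Z} B(P_{\leq k_0-3}\partial^\alpha f, P_{k_0}D^{s-k}g)
\]
and note that the $k_0$-th summand has Fourier support in an annulus at scale $2^{k_0}$, since the low frequencies $|\xi|\leq 2^{k_0-2}$ of the first argument combined with the annular frequencies $|\eta|\sim 2^{k_0}$ of the second give $|\xi+\eta|\sim 2^{k_0}$. By Lemma \ref{Lemma:2.1},
\[
\|B_\ll(\partial^\alpha f, D^{s-k}g)\|_{L^p} \sim \bigl\|\bigl(B(P_{\leq k_0-3}\partial^\alpha f, P_{k_0}D^{s-k}g)\bigr)_{k_0}\bigr\|_{L^p(\ell^2)}.
\]
Since $b$ is Coifman-Meyer, the composite bilinear operator with the cutoffs $\widehat\Psi_{k_0-3}\otimes\widehat\Phi_{k_0}$ has a kernel of Schwartz type at scale $2^{-k_0}$, yielding the pointwise bilinear majorization $|B(P_{\leq k_0-3}H_1, P_{k_0}H_2)(x)|\leq C\,M(P_{\leq k_0-3}H_1)(x)\cdot M(P_{k_0}H_2)(x)$.

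The crux is to establish the key pointwise bounds with the correct scaling. Factoring $\partial^\alpha = D^{s_1}\sigma(D)$ with $\sigma(\xi) = (i\xi)^\alpha|\xi|^{-s_1}\in S^{k-s_1}$, and observing that $\sigma(D)\Phi_j$ rescales to $2^{j(k-s_1)}L_j$ for a fixed Schwartz function $L$ (because $\sigma\cdot\widehat\Phi$ has compact Fourier support bounded away from the origin), one obtains $|P_j\partial^\alpha f(x)|\leq C\,2^{j(k-s_1)} M(D^{s_1}f)(x)$. Summing the geometric series (with ratio $2^{k-s_1}$) over $j\leq k_0-3$, under $s_1<k$, produces
\[
|P_{\leq k_0-3}\partial^\alpha f(x)|\leq C\,2^{k_0(k-s_1)} M(D^{s_1}f)(x).
\]
A parallel argument using $D^{s-k}=D^{-(k-s_1)}D^{s_2}$ and the representation $D^{-(k-s_1)}\Phi_{k_0} = 2^{-k_0(k-s_1)}\tilde L_{k_0}$ for a Schwartz $\tilde L$ with annular Fourier support yields
\[
|P_{k_0}D^{s-k}g(x)|\leq C\,2^{-k_0(k-s_1)} M(\tilde P_{k_0}D^{s_2}g)(x),
\]
where $\tilde P_{k_0}$ denotes a slightly widened Littlewood-Paley projection at scale $2^{k_0}$.

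Combining, the exponential factors $2^{\pm k_0(k-s_1)}$ cancel, and one obtains $|B(P_{\leq k_0-3}\partial^\alpha f, P_{k_0}D^{s-k}g)(x)|\leq C\, M^2(D^{s_1}f)(x)\cdot M^2(\tilde P_{k_0}D^{s_2}g)(x)$. Pulling the $k_0$-independent factor $M^2(D^{s_1}f)(x)$ out of the $\ell^2$ sum in $k_0$, then applying H\"older's inequality, the $L^p$-boundedness of the Hardy-Littlewood maximal function, Fefferman-Stein (Lemma \ref{Lemma:2.4}), and Lemma \ref{Lemma:2.1} yields the desired $C\|D^{s_1}f\|_{L^{p_1}}\|D^{s_2}g\|_{L^{p_2}}$ bound. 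The main obstacle is the endpoint case $s_1=k$, where the geometric sum diverges; there I would instead use the Coifman-Meyer estimate \eqref{eq.CM1} directly, combined with the $L^{p_1}$-boundedness of the Riesz transform product $R^\alpha = \partial^\alpha D^{-k}$, to obtain $\|B_\ll(\partial^\alpha f, D^{s-k}g)\|_{L^p}\lesssim \|D^k f\|_{L^{p_1}}\|D^{s-k}g\|_{L^{p_2}}$.
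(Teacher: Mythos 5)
Your argument is correct, but it follows a genuinely different route from the paper, which in fact gives no proof at all: it states only that the lemma ``follows from the Coifman--Meyer estimate \eqref{eq.CM1}'' and skips the details. Judging from the proof of the neighbouring Lemma \ref{Lemma:3.2}, the intended argument is to apply the Coifman--Meyer estimate at the two extreme redistributions --- $s_1=k$ (giving $\|\partial^\alpha f\|_{L^{p_1}}\|D^{s-k}g\|_{L^{p_2}}$, then Riesz transforms) and $s_1=0$ (moving all $k$ derivatives onto $g$, which is legitimate because $\xi^\alpha|\eta|^{-k}$ times the cutoffs $\sum_{k_0}\widehat\Psi_{k_0-3}(\xi)\widehat\Phi_{k_0}(\eta)$ is again a CM symbol on the cone $|\xi|\le|\eta|/2$) --- and then to interpolate in $s_1$. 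You instead prove the full range of intermediate $s_1$ directly: Littlewood--Paley decomposition in the output frequency, the pointwise kernel majorization of each frequency-localized CM piece by a product of maximal functions, the cancellation of the factors $2^{\pm k_0(k-s_1)}$ coming from $\partial^\alpha=D^{s_1}\sigma(D)$ and $D^{s-k}=D^{-(k-s_1)}D^{s_2}$, and then H\"older, the maximal inequality, Fefferman--Stein and the square-function characterization. What your approach buys is self-containedness: it avoids the bilinear interpolation step, which the paper leaves implicit and which would itself require some justification (e.g.\ Stein interpolation for the analytic family $(D^{z}f,D^{s-z}g)$), and it makes visible the mechanism --- the frequency constraint $|\xi|\lesssim|\eta|$ --- that permits the redistribution. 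What it costs is length, the need for $m_n$ in Definition \ref{Definition:1} to be large enough (at least $2n+2$, so that the localized bilinear kernel is integrable in each variable separately), and a separate treatment of the endpoint $s_1=k$ where your geometric series diverges; you correctly identify and handle that endpoint by the direct Coifman--Meyer bound. Two minor points: the equivalence ``$\sim$'' in your square-function step should be only ``$\lesssim$'' (which is all you use and all that holds without further argument), and for $k\ge 1$ the absolute convergence of $\sum_{j\le k_0-3}P_j\partial^\alpha f$ is guaranteed by the vanishing of $(i\xi)^\alpha\hat f(\xi)$ at the origin, which is worth saying explicitly.
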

The proof follows from the Coifman - Meyer estimate \eqref{eq.CM1} and we skip it.

Lemma \ref{Lemma:2.6} implies:

\begin{Lemma}
\label{Lemma:3.1}
Let $s \geq 0$.
If $a \in S^s(\mathbb R^n)$,
then with $a^s(\xi,\eta,\theta)=| \eta + \theta \xi|^s$ we have
	\[
	\sup_{0 \leq \theta \leq 1}
	| \partial_\xi^\alpha \partial_\eta^\beta a^s_\ll(\xi, \eta, \theta) |
	\leq C |\eta|^{s-|\alpha|-|\beta|}.
	\]
\end{Lemma}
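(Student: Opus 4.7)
The plan is to exploit the Littlewood--Paley cutoffs that appear in the definition of $a^s_\ll$ (via \eqref{eq:3.2}) in order to reduce the estimate to points $(\xi,\eta)$ lying in the cone $\Gamma$, where Lemma \ref{Lemma:2.6} already gives the desired bound on the full symbol $a^s(\xi,\eta,\theta) = |\eta + \theta\xi|^s$.

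First, I would write
\[
a^s_\ll(\xi,\eta,\theta) = \sum_{k \in \mathbb{Z}} \widehat \Psi_{k-3}(\xi)\, \widehat \Phi_k(\eta)\, |\eta + \theta\xi|^s,
\]
and note two support facts. On the one hand, for a fixed $(\xi,\eta)$ with $\eta \neq 0$, the factor $\widehat \Phi_k(\eta)$ forces $2^{k-1} \le |\eta| \le 2^{k+1}$, so at most three indices $k$ contribute to the sum. On the other hand, for those $k$, the factor $\widehat \Psi_{k-3}(\xi)$ forces $|\xi| \le 2^{k-2} \le |\eta|/2$, so $(\xi,\eta) \in \Gamma$.

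Next, I would apply the Leibniz rule to distribute $\partial_\xi^\alpha \partial_\eta^\beta$ among the three factors:
\[
\partial_\xi^\alpha \partial_\eta^\beta \bigl[\widehat \Psi_{k-3}(\xi)\widehat \Phi_k(\eta) a^s(\xi,\eta,\theta)\bigr]
= \sum_{\substack{\alpha_1+\alpha_2=\alpha\\ \beta_1+\beta_2=\beta}} c_{\alpha_1,\beta_1}^{\alpha,\beta}\, \partial_\xi^{\alpha_1}\widehat \Psi_{k-3}(\xi)\,\partial_\eta^{\beta_1}\widehat \Phi_k(\eta)\,\partial_\xi^{\alpha_2}\partial_\eta^{\beta_2} a^s(\xi,\eta,\theta).
\]
For the cutoff pieces, the scaling identities $\widehat \Psi_{k-3}(\xi) = \widehat \Psi(2^{-(k-3)}\xi)$ and $\widehat \Phi_k(\eta) = \widehat \Phi(2^{-k}\eta)$ give
\[
\bigl|\partial_\xi^{\alpha_1}\widehat \Psi_{k-3}(\xi)\bigr| \le C\, 2^{-k|\alpha_1|},\qquad
\bigl|\partial_\eta^{\beta_1}\widehat \Phi_k(\eta)\bigr| \le C\, 2^{-k|\beta_1|}.
\]
For the symbol piece, since $(\xi,\eta) \in \Gamma$ throughout the support, Lemma \ref{Lemma:2.6} applied to $|\cdot|^s \in S^s$ gives
\[
\sup_{0\le\theta\le 1} \bigl|\partial_\xi^{\alpha_2}\partial_\eta^{\beta_2} a^s(\xi,\eta,\theta)\bigr|
\le C\, |\eta|^{s - |\alpha_2| - |\beta_2|}.
\]
Combining the three estimates and using $2^k \sim |\eta|$ on the support, each summand is bounded by $C|\eta|^{s - |\alpha| - |\beta|}$, and summing over the at most three contributing indices $k$ yields the claim.

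The main ``obstacle'' is really only the bookkeeping in the Leibniz expansion and the verification that the cutoff supports do embed into $\Gamma$; once those are in place the estimate is a direct consequence of Lemma \ref{Lemma:2.6}. No delicate cancellation or oscillatory analysis is needed.
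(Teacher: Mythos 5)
Your argument is correct and is precisely what the paper intends: the paper offers no written proof of Lemma \ref{Lemma:3.1}, stating only that it follows from Lemma \ref{Lemma:2.6}. Your reduction to the cone $\Gamma$ via the supports of $\widehat{\Psi}_{k-3}$ and $\widehat{\Phi}_{k}$, the Leibniz expansion with the scaling bounds $2^{-k(|\alpha_1|+|\beta_1|)}\sim |\eta|^{-|\alpha_1|-|\beta_1|}$ on the cutoffs, and the application of Lemma \ref{Lemma:2.6} to the symbol factor supply exactly the omitted details.
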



Another useful application of the Coifman - Meyer estimate \eqref{eq.CM1}
concerns the bilinear form
	\begin{equation}\label{eq:3.3}
	B(f,g) = D^{s_1} (\partial^\alpha f D^{s_2}g ).
	\end{equation}

\begin{Lemma}
\label{Lemma:3.2}
Let $\alpha$ be a multi-index
and $s_1,s_2,s_3,s_4$ be non-negative numbers satisfying
	\[
	s_1+|\alpha|+s_2= s_3+s_4, \ s_3 \leq |\alpha|
	\]
and let $1 < p, p_1, p_2 < \infty$
satisfy $1/p = 1/p_1 + 1/p_2$.
Then the bilinear form \eqref{eq:3.3} satisfies
	\begin{align}
	\| B_\ll(f,g) \|_{L^p}
	\leq C \|D^{s_3}f\|_{L^{p_1}} \|D^{s_4} g\|_{L^{p_2}} .
	\label{eq:3.4}
	\end{align}
\end{Lemma}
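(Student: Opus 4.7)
The plan is to reduce the estimate to Lemma \ref{Lemma:3.2a} by isolating the outer Riesz potential $D^{s_1}$ as part of a Coifman--Meyer multiplier. I first introduce the auxiliary bilinear operator
\[
\tilde B(u,v) = D^{s_1}(u\cdot D^{s_2}v),
\]
whose symbol is $|\xi+\eta|^{s_1}|\eta|^{s_2}$. Since $\partial^\alpha$ commutes with every $P_j$ and with $D^{s_2}$, one has $B_\ll(f,g) = \tilde B_\ll(\partial^\alpha f, g)$, so it suffices to bound $\tilde B_\ll(\partial^\alpha f, g)$ in the required norms.

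The key observation is that on the support of the paraproduct cutoff $\sum_k \widehat\Psi_{k-3}(\xi)\widehat\Phi_k(\eta)$ the pair $(\xi,\eta)$ lies in the cone $\Gamma$, where $|\xi+\eta|$ is comparable to $|\eta|$. Letting $\chi_0$ be a smooth, homogeneous-of-degree-zero cutoff equal to $1$ on $\Gamma$ and supported in a slightly wider cone, I would verify, using Lemma \ref{Lemma:1.2} (cf.\ also Lemma \ref{Lemma:3.1}) together with the Leibniz rule applied to $|\eta|^{-s_1}$, that
\[
\tilde b(\xi,\eta) = \chi_0(\xi,\eta)\,\bigl(|\xi+\eta|/|\eta|\bigr)^{s_1}
\]
belongs to the Coifman--Meyer class; the potentially singular factor $|\eta|^{-s_1}$ causes no difficulty precisely because the cone condition forces $|\eta|\sim|\xi+\eta|\sim|\xi|+|\eta|$ on $\mathrm{supp}\,\tilde b$. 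Let $B^{\flat}_\ll$ denote the paraproduct operator of the form \eqref{eq:3.2} built from the Coifman--Meyer symbol $\tilde b$. Since $\chi_0\equiv 1$ on $\mathrm{supp}\,\widehat\Psi_{k-3}(\xi)\widehat\Phi_k(\eta)$, the symbol of $\tilde B_\ll$ factors on that set as $\tilde b(\xi,\eta)\,|\eta|^{s_1+s_2}$, yielding the identity
\[
\tilde B_\ll(u,v) = B^{\flat}_\ll(u,\,D^{s_1+s_2}v),
\]
and therefore $B_\ll(f,g) = B^{\flat}_\ll(\partial^\alpha f,\,D^{s_1+s_2}g)$.

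It now suffices to apply Lemma \ref{Lemma:3.2a} to $B^{\flat}_\ll$ with $k=|\alpha|$ and total order $s=s_1+|\alpha|+s_2$, so that $s-k=s_1+s_2$; the redistribution $(s_1',s_2')=(s_3,s_4)$ is admissible because the hypotheses $s_3+s_4=s_1+|\alpha|+s_2$ and $s_3\le|\alpha|$ become exactly $s_1'+s_2'=s$ and $s_1'\le k$. This delivers the claimed bound. The principal technical obstacle is the Coifman--Meyer verification of $\tilde b$; keeping $\partial^\alpha f$ as an input to $B^{\flat}_\ll$, rather than trying to absorb the factor $(i\xi)^\alpha/|\xi|^{s_3}$ into a single symbol (which would be non-smooth at $\xi=0$), is precisely what makes it possible to off-load the remaining redistribution of derivatives onto Lemma \ref{Lemma:3.2a}.
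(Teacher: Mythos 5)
Your proof is correct, but it reaches the conclusion by a different route than the paper. The paper proves Lemma \ref{Lemma:3.2} directly: by Lemma \ref{Lemma:3.1} the symbol of $B_\ll$ divided by $|\eta|^{s_1+s_2}$ (respectively by $|\eta|^{s_1+s_2+|\alpha|}$, after absorbing the factor $(i\xi)^\alpha$ into the symbol on the cone) is of Coifman--Meyer type, so the Coifman--Meyer estimate yields the two endpoint bounds $\|B_\ll(f,g)\|_{L^p}\le C\|D^{|\alpha|}f\|_{L^{p_1}}\|D^{s_1+s_2}g\|_{L^{p_2}}$ and $\|B_\ll(f,g)\|_{L^p}\le C\|f\|_{L^{p_1}}\|D^{s_1+s_2+|\alpha|}g\|_{L^{p_2}}$, and the intermediate cases $0\le s_3\le|\alpha|$ follow by interpolating between them. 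Your factorization $B_\ll(f,g)=B^{\flat}_\ll(\partial^\alpha f,\,D^{s_1+s_2}g)$ is valid: the paraproduct cutoff confines $(\xi,\eta)$ to the cone $\Gamma$, where $|\xi+\eta|\sim|\eta|\sim|\xi|+|\eta|$, so $\tilde b$ is indeed in the CM class (this is Lemma \ref{Lemma:1.2} with $\theta=1$, divided by $|\eta|^{s_1}$), and Lemma \ref{Lemma:3.2a} then applies with $k=|\alpha|$ and $s=s_1+|\alpha|+s_2$. You are also right that one cannot absorb $(i\xi)^\alpha|\xi|^{-s_3}$ into a single CM symbol, since its $\xi$-derivatives of high order violate the $(|\xi|+|\eta|)^{-|\gamma|}$ bound near $\xi=0$. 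The one caveat is that the redistribution step --- passing from $s_3=|\alpha|$ to arbitrary $s_3\le|\alpha|$ --- is exactly the content of Lemma \ref{Lemma:3.2a}, whose proof the paper skips; so your argument is a clean and legitimate reduction within the paper's logical order, but the substantive work (the two endpoint Coifman--Meyer bounds plus the interpolation, which is what the paper writes out explicitly here) is deferred to that earlier lemma rather than eliminated.
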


\begin{proof}
By Lemma \ref{Lemma:3.1} and Coifman - Meyer estimate \eqref{eq.CM1}
	\[
	\| B_\ll(f,g) \|_{L^p}
	\leq C \| \partial^\alpha f\|_{L^{p_1}} \|D^{s_1 + s_2} g\|_{L^{p_2}}
	\leq C \| D^{|\alpha|} f\|_{L^{p_1}} \|D^{s_1 + s_2} g\|_{L^{p_2}}
	\]
and
	\[
	\| B_\ll(f,g) \|_{L^p}
	\leq C \| f\|_{L^{p_1}} \|D^{s_1 + s_2 + |\alpha|} g\|_{L^{p_2}}.
	\]
By interpolating these two estimate,
we obtain \eqref{eq:3.4}.
\end{proof}

\begin{proof}[Proof of Theorem \ref{Theorem:1.1}]
Consider the bilinear form $B(f,g) = D^s(fg).$
We have the decomposition \eqref{eq:3.1}.
For the term $ B_\sim(f,g) $ we can apply the estimate of Lemma \ref{Lemma:2.5}.
Therefore, it is sufficient to show that
	\begin{equation}\label{eq:3.5}
	B_\ll(f,g) = \sum_{m=0}^{\ell-1}
	A_{s, \ll}^m(0) (f, g) + \sum_{|\alpha| = \ell}
	T^\alpha_\ll(\partial^\alpha f, D^{s-\ell} g),
	\end{equation}
where $T_\ll^\alpha$ is a Coifman-Meyer bilinear form
	\[
	T^\alpha_\ll(F,G)(x) = \int_{\mathbb{R}^n} \int_{\mathbb{R}^n}
	e^{i x(\xi+\eta)} t^\alpha_\ll(\xi,\eta) \widehat F(\xi) \widehat G(\eta) d\xi d\eta
	\]
with symbol $ t^\alpha_\ll(\xi,\eta)$ in the CM class supported in $\{|\xi| \leq |\eta|/2\}$,
so it satisfies the estimate
	\begin{equation}\label{eq:3.6}
	\|T_\ll^\alpha (F,G)\|_{L^p}\leq C_{p,p_1,p_2}\|F\|_{L^{p_1}}\|G\|_{L^{p_2}}
	\end{equation}
for all $1<p,p_1,p_2<\infty$ with $1/p=1/p_1+1/p_2$.

We can use the Taylor expansion with respect to $\theta$:
	\[
	a_s(\xi,\eta,1)
	= \sum_{m=0}^{\ell-1} \frac{1}{m!} \partial_\theta^m a_s(\xi,\eta,0)
	+ \frac{1}{(\ell-1)!} \int_0^1 (1-\theta)^{\ell-1} \partial_\theta^\ell
	a_s(\xi,\eta,\theta) d \theta
	\]
and note that \eqref{eq:1.7} implies
	\[
	B_\ll(f,g) = A_{s,\ll}^0(1)(f,g)
	\]
so the Taylor expansion for $a_s(\xi,\eta,1)$ implies \eqref{eq:3.5} with symbol
	\[
	t^\alpha_\ll(\xi,\eta)
	= \sum_{k \in \mathbb{Z}} \widehat \Psi_{k-3}(\xi)\widehat \Phi_k(\eta)
	\int_0^1 (1-\theta)^{|\alpha| -1} \theta^{|\alpha|} \partial_\eta^\alpha a_s(\xi,\eta,\theta)
	\frac{d\theta}{(|\alpha|-1)!} |\eta|^{-s+|\alpha|}.
	\]
An application of Lemma \ref{Lemma:3.1} shows that
$t^\alpha_\ll(\xi,\eta)$ belongs to the CM class
so the Coifman-Meyer estimate proves \eqref{eq:3.6}
and completes the proof of the theorem.
\end{proof}

\subsection{Proof of Corollary \ref{Corollary:1.1}}
\label{subsection:3.1}
Let $B(f,g) = D^s(fg) - f D^s g - g D^s f$.
Then the term $B_\sim(f,g)$ can be  estimated by Lemma \ref{Lemma:2.5}.
So it is sufficient to check the estimate
	\begin{equation}\label{eq:3.7}
	\|  B_\ll(f,g) \|_{L^{p}}
	\leq C \|D^{s_1} f\|_{L^{p_1}} \| D^{s_2} g\|_{L^{p_2}}.
	\end{equation}
The term $ B_{\ll}(f,g)$ can be represented as
	\[
	B_{\ll}(f,g) = B^{I}_\ll(f,g) + B^{II}_\ll(f,g),
	\]
where
	\[
	B^{I}(f,g) = D^s( f g) - f D^s g
	\]
and \[ B^{II}(f,g) = - g D^s f.\]

The symbol of
	\[
	B^{I}_\ll(f,g) = A_s^0(1)( P_{\le k-3} f, P_k g)
	- A_s^0(0)(P_{\leq k-3} f, P_k g),
	\]
can be represented as by the aid of the Taylor expansion
	\[
	a_s(\xi,\eta,1)-a_s(\xi,\eta,0)
	= \int_0^1 \partial_\theta a_s(\xi,\eta,\theta) d \theta
	\]
so as in \eqref{eq:3.5} we have
	\[
	B^I_\ll(f,g) = \sum_{|\alpha| = 1}T^\alpha_\ll(\partial^\alpha f, D^{s-1} g)
	\]
with symbol
	\[
	t^\alpha_\ll(\xi,\eta)
	= \sum_{k \in \mathbb{Z}} \widehat \Psi_{k-3}(\xi)\widehat \Phi_k(\eta)
	\int_0^1 \theta \partial_\eta^\alpha a(\xi,\eta,\theta) d\theta |\eta|^{-s+1}
	\]
in the CM class.
Applying Lemma \ref{Lemma:3.2a}, we get
	\[
	\|  B^{I}_\ll(f,g) \|_{L^{p}}
	\leq C \|D^{s_1} f\|_{L^{p_1}} \| D^{s_2} g\|_{L^{p_2}}.
	\]
The term $ B^{II}_\ll(f,g)$ can be estimated by the aid of Lemma \ref{Lemma:3.2a} again,
so we get \eqref{eq:3.7} and the proof is complete.


\subsection{Proof of Corollary \ref{Corollary:1.2}}
\label{subsection:3.2}
Let
$B(f,g) = D^s(fg) - f D^s g - g D^s f + s D^{s-2} (\nabla f \cdot \nabla g)$.
The term $B_\sim(f,g)$ can be estimated by using Lemma \ref{Lemma:2.5}.
As in the proof of Corollary \ref{Corollary:1.1},
it is sufficient to show
\begin{equation}\label{eq.Co2}
   \| B_\ll(f,g) \|_{L^{p}}
	\leq C \|D^{s_1} f\|_{L^{p_1}} \| D^{s_2} g\|_{L^{p_2}}.
\end{equation}
The term $ B_{\ll}(f,g)$ can be represented as follows
	\[
	B_{\ll}(f,g) = B^{I}_\ll(f,g) + B^{II}_\ll(f,g) + B^{III}_\ll(f,g),
	\]
where
	\begin{align*}
	B^{I}(f,g) &= D^s( f g) - f D^s g + s \nabla f \cdot D^{s-2} \nabla g,\\
	B^{II}(f,g) &= s D^{s-2}(\nabla f \cdot \nabla g) - s \nabla f \cdot D^{s-2} \nabla g,\\
	B^{III}(f,g) &= - g D^s f.
	\end{align*}
Then
	\begin{align*}
	B_{\ll}^{I}(f,g)
	&= A_{s,\ll}^0(1)(f,g) - A_{s,\ll}^0(0)(f,g) - A_{s,\ll}^1(0)(f,g)
	= \sum_{|\alpha|=2} T_{\ll}^{\alpha} (\partial^\alpha f, D^{s-2} g),\\
	B_{\ll}^{II}(f,g)
	&= \sum_{m=1}^n s\{ A_{s-2,\ll}^0(1)(\partial_m f, \partial_m g)
	- A_{s-2,\ll}^0(0)(\partial_m f, \partial_m g) \}\\
	= \sum_{m=1}^{n} &  \sum_{|\alpha|=1} s\widetilde T_{\ll}^{\alpha}
	(\partial^\alpha \partial_m f, D^{s-3} \partial_m g)
	\end{align*}
with symbol
	\begin{align*}
	t^\alpha_\ll(\xi,\eta)
	&= \sum_{k \in \mathbb{Z}} \widehat \Psi_{k-3}(\xi)\widehat \Phi_k(\eta)
	\int_0^1 (1-\theta) \theta^2 \partial_\eta^\alpha a_s(\xi,\eta,\theta)
	d\theta |\eta|^{-s+2},\\
	\widetilde t^\alpha_\ll(\xi,\eta)
	&= \sum_{k \in \mathbb{Z}} \widehat \Psi_{k-3}(\xi) \widehat \Phi_k(\eta)
	\int_0^1 \theta \partial_\eta^\alpha a_{s-2}(\xi,\eta,\theta) d\theta |\eta|^{-s+3}
	\end{align*}
in the CM class.
Applying Lemma \ref{Lemma:3.2a}, we can estimate
$B^{I}_{\ll}(f,g),$ $B^{II}_{\ll}(f,g)$  and $B^{III}_{\ll}(f,g)$  and deduce \eqref{eq.Co2}.

This completes the proof.


\end{document}